\newcommand{\n}{\noindent}
\newcommand{\ovl}{\overline}
\newcommand{\intl}{\int\limits}
\newcommand{\bb}[1]{\mathbb{#1}}
\newcommand{\cl}[1]{\mathcal{#1}}
\newcommand{\vp}{\varepsilon}
\newcommand{\ds}{\displaystyle}
\theoremstyle{plain}
\newtheorem{thm}{Theorem}[section]
\newtheorem{lem}{Lemma}[section]
\newtheorem{pro}{Proposition}[section]
\newtheorem{cor}{Corollary}[section]
\theoremstyle{remark}
\theoremstyle{definition}
\numberwithin{equation}{section}
\title{Isolated Singularities of Polyharmonic Inequalities}
\author{Marius Ghergu\footnote{School of Mathematical Sciences,
    University College Dublin, Belfield, Dublin 4, Ireland; {\tt
      marius.ghergu@ucd.ie}}, Amir Moradifam\footnote{Dept.~of
  Mathematics, University of Toronto, Toronto, Ontario, CANADA M5S 2E4;
  {\tt amir@math.utoronto.ca}}, and Steven
  D.~Taliaferro\footnote{Mathematics Department, Texas A\&M
    University, College Station, TX 77843-3368; {\tt
      stalia@math.tamu.edu}} 
\footnote{Corresponding author, Phone 001-979-845-2404, Fax 001-979-845-6028}}
\date{}
\begin{document}
\date{} 
\maketitle

\thispagestyle{empty}

\begin{abstract}
We study nonnegative classical solutions $u$ of the polyharmonic inequality
\[
 -\Delta^mu \ge 0 \quad \text{in}\quad B_1(0) - \{0\} \subset{\bb R}^n.
\]
We give necessary and sufficient conditions on integers $n\ge 2$ and
$m\ge 1$ such that these solutions $u$ satisfy a pointwise a priori
bound as $x\to 0$. In this case we show that the optimal bound for $u$
is
\[
 u(x) = O(\Gamma(x))\quad \text{as}\quad x\to 0
\]
where $\Gamma$ is the fundamental solution of $-\Delta$ in ${\bb R}^n$.
\medskip

\n {\it Keywords:} Polyharmonic inequality, isolated singularity.

\n 2010 Mathematics Subject Classification Codes: 35B09, 35B40,
35B45, 35C15, 35G05, 35R45.
\end{abstract}

\section{Introduction}\label{sec1}

\indent 

It is easy to show that there does not exist a pointwise a priori
bound as $x\to 0$ for $C^2$ nonnegative solutions $u(x)$ of
\begin{equation}\label{eq1.1}
 -\Delta u\ge 0 \quad \text{in}\quad B_1(0)-\{0\} \subset {\bb R}^n,
\quad n\ge 2.
\end{equation}
That is, given any continuous function $\psi\colon (0,1)\to
(0,\infty)$ there exists a $C^2$ nonnegative solution $u(x)$ of
\eqref{eq1.1} such that
\[
 u(x)\ne O(\psi(|x|)) \quad \text{as}\quad x\to 0.
\]
The same is true if the inequality in \eqref{eq1.1} is reversed.

In this paper we study $C^{2m}$ nonnegative solutions of the polyharmonic
inequality
\begin{equation}\label{eq1.2}
 -\Delta^m u\ge 0\quad \text{in}\quad B_1(0) - \{0\}\subset {\bb R}^n
\end{equation}
where $n\ge 2$ and $m\ge 1$ are integers. We obtain the following result.

\begin{thm}\label{thm1.1}
  A necessary and sufficient condition on integers $n\ge 2$ and $m\ge
  1$ such that $C^{2m}$ nonnegative solutions $u(x)$ of \eqref{eq1.2}
  satisfy a pointwise a priori bound as $x\to 0$ is that 
\begin{equation}\label{eq1.3}
\text{either $m$ is even or $n<2m$.}
\end{equation}
In this case, the optimal bound for $u$ is
\begin{equation}\label{eq1.5}
 u(x) = O(\Gamma_0(x))\quad \text{as}\quad x\to 0,
\end{equation}
where
\begin{equation}\label{eq1.6}
 \Gamma_0(x)= \begin{cases}
              |x|^{2-n}&\text{if $n\ge 3$}\\
\noalign{\medskip}
\log \dfrac5{|x|}&\text{if $n=2$.}
             \end{cases}
\end{equation}
\end{thm}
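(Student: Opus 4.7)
My plan is to treat necessity and sufficiency separately. Throughout I denote by $E_m$ the fundamental solution of $(-\Delta)^m$ on $\mathbb R^n$, so $E_m(x)=c_{n,m}|x|^{2m-n}$ when $n\ne 2m$ and $E_m(x)=c_{n,m}\log(1/|x|)$ when $n=2m$; a direct computation from $(-\Delta)^m E_m=\delta_0$ shows that $c_{n,m}>0$ whenever $n\ge 2m$, so $E_m\ge 0$ in that regime. Moreover $-\Delta^m E_m=(-1)^{m+1}\delta_0$, which is a nonnegative multiple of $\delta_0$ precisely when $m$ is odd.

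For necessity, assume $m$ odd and $n\ge 2m$, and fix any $\psi\colon(0,1)\to(0,\infty)$. Choose a radial cutoff $\eta\in C^\infty_c(B_{1/4})$ with $\eta\equiv 1$ on $B_{1/8}$ and set $\Phi=\eta E_m$, adjusted if necessary by a smooth correction on the annulus $\{1/8<|x|<1/4\}$ so that $\Phi\ge 0$ and $-\Delta^m\Phi\ge 0$ throughout $\mathbb R^n$; this is possible because $-\Delta^m E_m=\delta_0\ge 0$ for $m$ odd. Pick $x_k=2^{-k}e_1$ and weights $\lambda_k>0$ with $\lambda_k\Phi(0)\ge k\psi(|x_k|)$, arranged so that $\mathrm{supp}\,\Phi(\cdot-x_k)$ are pairwise disjoint subsets of $B_1\setminus\{0\}$. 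Then
\[
u(x)=\sum_{k=1}^\infty\lambda_k\Phi(x-x_k)
\]
is a $C^{2m}$ nonnegative function on $B_1\setminus\{0\}$ satisfying $-\Delta^m u\ge 0$ termwise, and $u(x_k)\ge\lambda_k\Phi(0)$ grows faster than $\psi(|x_k|)$, ruling out any a priori bound $u(x)\le\psi(|x|)$.

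For sufficiency, assume \eqref{eq1.3} and let $u\ge 0$ be a $C^{2m}$ solution of \eqref{eq1.2}, with $\mu=-\Delta^m u\ge 0$ in $B_1\setminus\{0\}$. My plan is to establish a Riesz/B\^ocher-type decomposition
\[
u(x)=\int_{B_{1/2}}G(x,y)\,d\mu(y)+\sum_{j=1}^{m}a_j\widetilde E_j(x)+h(x)
\]
valid on $B_{1/2}\setminus\{0\}$, where $G$ is a positive Green's kernel for $(-\Delta)^m$ on $B_{1/2}$, each $\widetilde E_j$ is a radial polyharmonic singular kernel with exponent $2j-n$ (with logarithmic replacement at degeneracies), and $h$ is polyharmonic on $B_{1/2}$, hence bounded. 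Of the $\widetilde E_j$, the worst is $\widetilde E_1=|x|^{2-n}=\Gamma_0$; the rest are less singular. So the proof reduces to bounding the Green-potential $P(x)=\int G(x,y)\,d\mu(y)$ by a multiple of $\Gamma_0(x)$. Hypothesis \eqref{eq1.3} makes this possible in two ways: when $n<2m$ the kernel $G$ is bounded for $x$ near $0$ and $y$ away from $0$, so $P$ is no worse than $\Gamma_0$; when $m$ is even, $(-\Delta)^m u=-\mu\le 0$ so $u$ is $(-\Delta)^m$-subharmonic, and combining $u\ge 0$ with the positivity of $G$ (Boggio) via a maximum-principle argument yields a finite-mass bound on $\mu$ near $0$ and $P(x)=O(\Gamma_0(x))$. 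Optimality in \eqref{eq1.5} is then immediate from the example $u=\Gamma_0$, which satisfies $-\Delta^m u\ge 0$ on $B_1\setminus\{0\}$. The chief obstacle, I anticipate, is the sign-cancellation step for $m$ even: translating $u\ge 0$ into a quantitative bound on $\mu$ calls for a delicate use of Boggio's formula and the polyharmonic maximum principle, which is exactly where the parity of $m$ and the interplay of $n$ with $2m$ become essential.
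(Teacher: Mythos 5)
Both halves of your proposal contain gaps that would need to be filled by the paper's actual technical work.

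\textbf{Necessity.} Your plan to construct $u$ as a superposition $\sum_k \lambda_k\Phi(\cdot-x_k)$, with $\Phi=\eta E_m$ ``adjusted by a smooth correction'' so that $\Phi\ge 0$ and $-\Delta^m\Phi\ge 0$ throughout $\mathbb R^n$, cannot work. If $\Phi$ is compactly supported (even allowing a Dirac at $0$), then testing $-\Delta^m\Phi$ against a cutoff which is $1$ on $\operatorname{supp}\Phi$ gives total mass zero; if in addition $-\Delta^m\Phi\ge 0$ as a distribution, then $-\Delta^m\Phi=0$, so $\Phi$ is compactly supported and polyharmonic, hence identically $0$. No annulus correction can save this. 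Moreover, if you keep the singularity of $E_m$, then $u$ is not $C^{2m}$ near each $x_k$. The paper circumvents both problems by convolving the sign-normalized fundamental solution against smooth nonnegative bumps $f_j$ concentrated near $x_j$: set $u(x)=\int_{|y|<1}(-\Phi(x-y))\,f(y)\,dy$ with $f=\sum f_j\ge 0$; then $-\Delta^m u=f\ge 0$ automatically, $u$ is smooth, and $u(x_j)$ can be made large because $-\Phi\to\infty$ at $0$ precisely when \eqref{eq1.3} fails.

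\textbf{Sufficiency.} Your Riesz/B\^ocher decomposition is the right shape only in spirit. Two substantive issues: (i) the singular part is not a radial sum $\sum a_j\widetilde E_j$; the Brezis--Lions-type argument yields a full sum $\sum_{|\alpha|\le 2m-2}a_\alpha D^\alpha\Phi$, and it is a nontrivial fact (needing the careful Taylor-remainder bookkeeping) that the index set stops at $|\alpha|\le 2m-2$, which is what makes the worst term exactly $\Gamma_0$; (ii) the measure $\mu=-\Delta^m u$ is in general \emph{not} finite near $0$, only $\int|y|^{2m-2}\,d\mu(y)<\infty$ (this is Lemma~\ref{lem2.4} of the paper, valid for all $m,n$), so neither ``$G$ is bounded when $n<2m$'' nor a ``finite-mass bound when $m$ is even'' establishes $\int G(x,y)\,d\mu(y)=O(\Gamma_0(x))$; the source of trouble is $y$ near $0$, not $y$ away from $0$. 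The paper's decisive device is to replace $\Phi(x-y)$ by the Taylor remainder $\Psi(x,y)=\Phi(x-y)-\sum_{|\alpha|\le 2m-3}\frac{(-y)^\alpha}{\alpha!}D^\alpha\Phi(x)$, which obeys $|\Psi(x,y)|\le C|y|^{2m-2}\Gamma_0(x)$ for $|y|<|x|/2$, so the potential is controlled exactly by the weighted mass $\int|y|^{2m-2}\,d\mu$. The parity of $m$ versus the inequality $n<2m$ then enters not through a maximum-principle bound on $\mu$ but through the sign analysis of $D^\alpha\Phi$ in the representation (Cases I(a), I(b), II of the proof of Theorem~\ref{thm1.3}), which is where your proposal is missing the actual argument.
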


The $m$-Kelvin transform of a function $u(x)$, $x\in\Omega\subset 
{\bb R}^n-\{0\}$, is defined by 
\begin{equation}\label{eq1.5.1}
v(y)=|x|^{n-2m}u(x) \qquad \text{where} \quad x=y/|y|^2. 
\end{equation}
By direct computation, $v(y)$ satisfies 
\begin{equation}\label{eq1.5.2}
\Delta^mv(y)=|x|^{n+2m}\Delta^mu(x). 
\end{equation}
See \cite[p.~221]{WX} or \cite[p.~660]{X}. This fact and
Theorem \ref{thm1.1} immediately imply the following result.

\begin{thm}\label{thm1.2}
  A necessary and sufficient condition on integers $n\ge 2$ and $m\ge
  1$ such that $C^{2m}$ nonnegative solutions $v(y)$ of
\[
 -\Delta^m v\ge 0\quad \text{in}\quad {\bb R}^n-B_1(0)
\]
satisfy a pointwise a priori bound as $|y|\to \infty$ is that 
\eqref{eq1.3} holds. In this case, the optimal bound
for $v$ is
\begin{equation}\label{eq1.7}
 v(y) = O(\Gamma_\infty(y))\quad \text{as}\quad |y|\to\infty
\end{equation}
where
\begin{equation}\label{eq1.71}
 \Gamma_\infty(y) = \begin{cases}
                     |y|^{2m-2}&\text{if $n\ge 3$}\\
|y|^{2m-2}\log(5|y|)&\text{if $n=2$.}
                    \end{cases}
\end{equation}
\end{thm}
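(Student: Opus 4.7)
The plan is to deduce Theorem~\ref{thm1.2} directly from Theorem~\ref{thm1.1} by means of the $m$-Kelvin transform defined in~\eqref{eq1.5.1}--\eqref{eq1.5.2}. The inversion $x=y/|y|^2$ is an involution on $\bb{R}^n-\{0\}$ that exchanges the punctured ball $B_1(0)-\{0\}$ with the exterior domain $\bb{R}^n-\ovl{B_1(0)}$, since $|x|<1\iff|y|>1$. Because $|x|^{n-2m}>0$ the transformation preserves nonnegativity, and by~\eqref{eq1.5.2} it preserves the sign of $-\Delta^m$. Consequently, $v$ is a nonnegative $C^{2m}$ solution of $-\Delta^m v\ge 0$ in $\bb{R}^n-B_1(0)$ if and only if
\[
u(x):=|y|^{n-2m}v(y),\qquad y=x/|x|^2,
\]
is a nonnegative $C^{2m}$ solution of $-\Delta^m u\ge 0$ in $B_1(0)-\{0\}$.

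The next step is to translate the pointwise a priori bound. Under condition~\eqref{eq1.3}, Theorem~\ref{thm1.1} yields $u(x)=O(\Gamma_0(x))$ as $x\to 0$. Reading this back through the transformation via $v(y)=|x|^{n-2m}u(x)$ with $|x|=1/|y|$ gives $v(y)=|y|^{2m-n}O(\Gamma_0(x))$ as $|y|\to\infty$. When $n\ge 3$ this yields $v(y)=|y|^{2m-n}\cdot O(|y|^{n-2})=O(|y|^{2m-2})$, and when $n=2$ it yields $v(y)=|y|^{2m-2}\,O(\log(5|y|))$ because $\log(5/|x|)=\log(5|y|)$. In both cases this matches $\Gamma_\infty$ as defined in~\eqref{eq1.71}.

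Finally, because the $m$-Kelvin transform is its own inverse, any counterexample to a proposed exterior bound for $v$ pulls back to a counterexample to the corresponding interior bound for $u$, and vice versa. Thus the necessity of~\eqref{eq1.3} and the optimality of $\Gamma_\infty$ both follow from the corresponding statements in Theorem~\ref{thm1.1}. No serious obstacle arises; the only thing requiring care is the bookkeeping of growth rates under $|y|=1/|x|$, especially in the logarithmic case $n=2$.
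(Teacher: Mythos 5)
Your proposal is correct and is essentially the paper's own argument: the paper deduces Theorem~\ref{thm1.2} from Theorem~\ref{thm1.1} precisely via the $m$-Kelvin transform \eqref{eq1.5.1}--\eqref{eq1.5.2}, which preserves nonnegativity and the sign of $-\Delta^m$ and sends $\Gamma_0$ to $\Gamma_\infty$ under $|x|=1/|y|$, exactly as you compute (including the logarithmic case $n=2$), with necessity and optimality transferring by the same involution.
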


The estimates \eqref{eq1.5} and \eqref{eq1.7} are optimal because
$\Delta^m\Gamma_0=0=\Delta^m\Gamma_\infty$ in ${\bb R}^n-\{0\}$.

The sufficiency of condition \eqref{eq1.3} in Theorem \ref{thm1.1} and
the estimate \eqref{eq1.5} are an immediate consequence of the
following theorem, which gives for $C^{2m}$ nonnegative solutions $u$
of \eqref{eq1.2} one sided estimates for $\Delta^\sigma u$,
$\sigma=0,1,2,\dots, m$, and estimates for $|D^\beta u|$ for certain 
multi-indices $\beta$.

\begin{thm}\label{thm1.3}
Let $u(x)$ be a $C^{2m}$ nonnegative solution of
\begin{equation}\label{eq4.1}
 -\Delta^mu\ge 0\quad \text{in}\quad B_2(0) -\{0\}\subset {\bb R}^n,
\end{equation}
where $n\ge 2$ and $m\ge 1$ are integers. Then for each nonnegative
integer $\sigma\le m$ we have
\begin{equation}\label{eq4.2}
(-1)^{m+\sigma} \Delta^\sigma u(x) 
\le C\left|\frac{d^{2\sigma}}{d|x|^{2\sigma}} \Gamma_0(|x|)\right|
\quad \text{for}\quad 0<|x|<1
\end{equation}
where $\Gamma_0$ is given by \eqref{eq1.6} and $C$ is a positive
constant independent of $x$.

Moreover, if $n<2m$ and $\beta$ is a multi-index then
\begin{equation}\label{eq4.3}
 |D^\beta u(x)| = O\left(\left|\frac{d^{|\beta|}}{d|x|^{|\beta|}} 
\Gamma_0(|x|)\right|\right) \quad \text{as}\quad x\to 0
\end{equation}
for 
\begin{equation}\label{neq4.4}
 |\beta|\le \begin{cases}
             2m-n&\text{if $n$ is odd}\\
2m-n-1&\text{if $n$ is even.}
            \end{cases}
\end{equation}
\end{thm}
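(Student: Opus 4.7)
My plan is to derive (4.2) by first averaging over spheres and performing a downward induction on $\sigma$ in an ODE inequality, then upgrading the averaged bound to a pointwise bound via a rescaling and interior regularity argument; the estimate (4.3) will follow by differentiating an iterated Newtonian representation of $u$ whose kernel is only mildly singular precisely when $n<2m$.

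Let $\bar u(r)=\frac{1}{|\partial B_r|}\int_{\partial B_r}u\,dS$. Since averaging commutes with $\Delta^k$, one has $\bar u\ge 0$ and $-L^m\bar u\ge 0$ on $(0,2)$, where $L=\partial_r^2+\frac{n-1}{r}\partial_r$. The key technical step is the averaged version of (4.2),
$$
(-1)^{m+\sigma}L^\sigma\bar u(r)\le C_\sigma\Bigl|\tfrac{d^{2\sigma}}{dr^{2\sigma}}\Gamma_0(r)\Bigr|\qquad(0<r<1,\ \sigma=0,\ldots,m),
$$
proved by downward induction on $\sigma$. The base case $\sigma=m$ is immediate from $L^m\bar u\le 0$. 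For the inductive step, since $L(L^\sigma\bar u)=L^{\sigma+1}\bar u$ has right-hand side already controlled, integrating this ODE inward from a reference radius $r_0\in(1,2)$ (at which $\bar u$ and its radial derivatives are a priori finite) against the explicit radial Green kernel yields
$$
L^\sigma\bar u(r)=A_\sigma+B_\sigma\Gamma_0(r)+\int_r^{r_0}K_n(r,s)\,L^{\sigma+1}\bar u(s)\,s^{n-1}\,ds;
$$
the inductive bound on $L^{\sigma+1}\bar u$ combined with explicit kernel computations then yields the claimed bound at level $\sigma$.

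To upgrade the averaged bound to the pointwise estimate (4.2), set $w_\sigma:=(-1)^{m+\sigma}\Delta^\sigma u$. Fixing $x$ with $r=|x|$ small, rescale via $\tilde w_\sigma(y):=r^{n+2\sigma-2}w_\sigma(ry)$ on the annulus $\{1/2<|y|<2\}$. Under this rescaling, $\Delta_y\tilde w_\sigma=-\tilde w_{\sigma+1}$ with right-hand side uniformly bounded by the inductive hypothesis at level $\sigma+1$, while the spherical average of $\tilde w_\sigma$ on each sphere $\partial B_\rho\subset\{1/2<|y|<2\}$ is uniformly bounded by the averaged lemma. Standard interior $L^p$-elliptic regularity on the fixed annulus then upgrades bounded spherical average plus bounded Laplacian to a uniform pointwise bound on $\tilde w_\sigma$, which translates back via the rescaling to (4.2).

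For (4.3), the hypothesis $n<2m$ makes the $m$-fold iterated Newtonian kernel $G_m(x,y)\sim|x-y|^{2m-n}$ only mildly singular (with a log correction when $n$ is even and $2m-n$ is a nonnegative even integer), so the representation
$$
u(x)=\int_{B_{r_0}}G_m(x,y)\,(-\Delta^m u)(y)\,dy+h(x),
$$
with $h$ polyharmonic on $B_{r_0}$, admits differentiation under the integral sign for $|\beta|\le 2m-n$ (with an extra loss of one in even $n$); the potential term is controlled using $-\Delta^m u\ge 0$ together with the integrability of $-\Delta^m u$ against $s^{n-1}\,ds$ implicit in the averaged lemma, while $h$ contributes $O(|x|^{2-n-|\beta|})$ by standard singular-polyharmonic estimates. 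The main obstacle is the downward ODE induction in the averaged lemma: the signs and constants must be tracked carefully through the Green-kernel integration, especially in the borderline dimensions where the radial fundamental solutions of $L^\sigma$ develop logarithmic factors, in order to recover the sharp power $r^{2-n-2\sigma}$ rather than a weaker one; a subsidiary technicality is establishing enough integrability of $-\Delta^m u$ for the representation used in (4.3), which requires a separate integration of the inductively obtained bounds.
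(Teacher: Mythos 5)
Your downward induction on $\sigma$ for the averaged estimate has a sign problem that I do not see how to repair. Write $v_\sigma=(-1)^{m+\sigma}L^\sigma\bar u$, so that $Lv_\sigma=-v_{\sigma+1}$. Integrating from $r_0\in(1,2)$ with the radial Green kernel $K_n(r,s)\ge 0$ gives
\begin{equation*}
v_\sigma(r)=A+B\,\Gamma_0(r)-\int_r^{r_0}K_n(r,s)\,v_{\sigma+1}(s)\,s^{n-1}\,ds .
\end{equation*}
To deduce an \emph{upper} bound on $v_\sigma(r)$ from this identity you need an upper bound on $-\int K_n v_{\sigma+1}s^{n-1}\,ds$, i.e.\ a \emph{lower} bound on $v_{\sigma+1}$. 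But the inductive hypothesis $(-1)^{m+\sigma+1}L^{\sigma+1}\bar u\le C|\Gamma_0^{(2\sigma+2)}|$ is an upper bound on $v_{\sigma+1}$, and the base case $v_m=L^m\bar u\le 0$ likewise gives no lower bound. Already at the first step ($\sigma=m-1$) the term $-\int K_n v_m s^{n-1}\,ds=\int K_n(-v_m)s^{n-1}\,ds$ is nonnegative and can be arbitrarily large as $r\to 0$ unless one first controls $f=-\Delta^m u$, which is exactly what must be proved. In short, the one-sided inequality propagates through the Green kernel in the direction opposite to the one you need, so the downward induction does not close; the elliptic-regularity upgrade built on top of it inherits the gap.

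There is a secondary issue in your argument for \eqref{eq4.3}. You write $u(x)=\int G_m(x,y)(-\Delta^m u)(y)\,dy+h(x)$ with $G_m(x,y)\sim|x-y|^{2m-n}$, but the only integrability one can extract from the structure of the problem is the weighted bound $\int_{|y|<1}|y|^{2m-2}(-\Delta^m u)(y)\,dy<\infty$ (this is the content of Lemma~\ref{lem2.4}). Since $G_m(x,y)$ does not vanish as $y\to 0$ for fixed $x\ne 0$, the integral you write need not converge for $m\ge 2$. The paper avoids this by replacing $\Phi(x-y)$ with the Taylor remainder $\Psi(x,y)=\Phi(x-y)-\sum_{|\alpha|\le 2m-3}\frac{(-y)^\alpha}{\alpha!}D^\alpha\Phi(x)$, which is $O(|y|^{2m-2})$ as $y\to 0$ and hence integrable against $f$; this in turn forces the Brezis--Lions style removal of higher-order delta derivatives in order to keep the polynomial correction $\sum a_\alpha D^\alpha\Phi$ at order $|\alpha|\le 2m-2$. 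That representation (Theorem~\ref{thm3.1}) together with a careful analysis of the sign of $\Delta^\sigma\Phi$ and of the decay of $D^\beta_x\Psi$ in the two regimes $|y|<|x|/2$ and $|y|>|x|/2$ is what actually yields \eqref{eq4.2} and \eqref{eq4.3}; your ODE-plus-regularity route, as stated, does not.
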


There is a similar result when the singularity is at infinity.

\begin{thm}\label{thm1.4}
Let $v(y)$ be a $C^{2m}$ nonnegative solution of
\begin{equation}\label{neq4.1}
 -\Delta^mv\ge 0\quad \text{in}\quad {\bb R}^n - B_{1/2}(0),
\end{equation}
where $n\ge 2$ and $m\ge 1$ are integers. Then for each nonnegative
integer $\sigma\le m$ we have
\begin{equation}\label{neq4.2}
(-1)^{m+\sigma} \Delta^\sigma(|y|^{2\sigma-2m}v(y)) 
\le C \begin{cases}
|y|^{-2}\log 5|y| &\text{if $\sigma=0$ and $n=2$}\\
|y|^{-2}  &\text{if $\sigma\ge 1$ or $n\ge 3$}
\end{cases} 
\quad \text{for} \quad |y|>1
\end{equation}
where $C$ is a positive
constant independent of $y$.

Moreover, if $n<2m$ and $\beta$ is a multi-index satisfying 
\eqref{neq4.4} then
\begin{equation}\label{neq4.3}
 |D^\beta v(y)| = O\left(\left|\frac{d^{|\beta|}}{d|y|^{|\beta|}} 
\Gamma_\infty(|y|)\right|\right) \quad \text{as}\quad |y|\to \infty
\end{equation}
where $\Gamma_\infty$ is given by \eqref{eq1.71}.
\end{thm}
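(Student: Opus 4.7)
The plan is to deduce Theorem~\ref{thm1.4} from Theorem~\ref{thm1.3} by exploiting the $m$-Kelvin transform \eqref{eq1.5.1}. Given $v$ satisfying \eqref{neq4.1}, set $u(x)=|y|^{n-2m}v(y)$ with $x=y/|y|^2$. Then $u\ge 0$ on $B_2(0)-\{0\}$ (because $|y|>1/2$ corresponds to $|x|<2$), and since $v(y)=|x|^{n-2m}u(x)$, identity \eqref{eq1.5.2} gives $\Delta^m v(y)=|x|^{n+2m}\Delta^m u(x)$, so $-\Delta^m u\ge 0$ on $B_2(0)-\{0\}$. Hence Theorem~\ref{thm1.3} applies to $u$, and the entire task reduces to transporting its conclusions back to $v$.

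For the one-sided estimate \eqref{neq4.2}, the key algebraic identity is
\[
|y|^{2\sigma-2m}v(y)=|x|^{n-2\sigma}u(x),
\]
which exhibits the left-hand side as the $\sigma$-Kelvin transform of $u$. Applying \eqref{eq1.5.2} with $m$ replaced by $\sigma$ yields
\[
\Delta^\sigma\bigl(|y|^{2\sigma-2m}v(y)\bigr)=|x|^{n+2\sigma}\Delta^\sigma u(x)=|y|^{-n-2\sigma}\Delta^\sigma u(x).
\]
Multiplying by $(-1)^{m+\sigma}$ and invoking \eqref{eq4.2} bounds this by $C|y|^{-n-2\sigma}\bigl|\tfrac{d^{2\sigma}}{d|x|^{2\sigma}}\Gamma_0(|x|)\bigr|$. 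A direct computation of the radial derivatives of $\Gamma_0$, combined with the substitution $|x|=1/|y|$, collapses the bound to $C|y|^{-2}$ in every case except $\sigma=0$, $n=2$, where the logarithm in $\Gamma_0(|x|)=\log(5/|x|)$ survives as $\log(5|y|)$. This reproduces \eqref{neq4.2} exactly.

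For the derivative estimate \eqref{neq4.3} in the regime $n<2m$, I would start from $v(y)=|y|^{2m-n}u(y/|y|^2)$ and combine the Leibniz and Fa\`a di Bruno formulas. The inverse map $\phi(y)=y/|y|^2$ is positively homogeneous of degree $-1$, so $|D^\gamma\phi(y)|=O(|y|^{-1-|\gamma|})$. In a typical Fa\`a di Bruno term for $D^{\beta_2}\bigl[u(\phi(y))\bigr]$, the factors $D^{\gamma_i}\phi$ have orders $|\gamma_i|$ summing to $|\beta_2|$, with a total of $|\alpha|$ factors, so their product contributes $O(|y|^{-|\alpha|-|\beta_2|})$. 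The second conclusion of Theorem~\ref{thm1.3}, which applies because every $|\alpha|\le|\beta|$ still satisfies \eqref{neq4.4}, gives (for $n\ge 3$) $|D^\alpha u(\phi(y))|=O(|y|^{n+|\alpha|-2})$, and the $|\alpha|$-dependence cancels, leaving $O(|y|^{n-2-|\beta_2|})$ uniformly in $\alpha$. Combining with $|D^{\beta_1}(|y|^{2m-n})|=O(|y|^{2m-n-|\beta_1|})$ through Leibniz produces $|D^\beta v(y)|=O(|y|^{2m-2-|\beta|})$, matching the radial derivatives of $\Gamma_\infty$. The $n=2$ case is analogous: the logarithmic factor from $\Gamma_0$ enters only through the $|\alpha|=0$ contribution (which survives only when $|\beta_2|=0$) and resurfaces as the $\log(5|y|)$ factor present in $\Gamma_\infty$.

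The main obstacle will be the bookkeeping in the Fa\`a di Bruno expansion, and in particular the verification that the $|\alpha|$-dependence cancels: the naive term-by-term bound worsens with the differentiation order $|\alpha|$ of $u$, but the sharp pointwise bound $|x|^{2-n-|\alpha|}$ from Theorem~\ref{thm1.3} is precisely calibrated to kill that dependence. Once this cancellation is pinned down and the logarithmic corrections for $n=2$ are tracked through, the remainder is a routine unpacking of the Kelvin identities already encoded in \eqref{eq1.5.2}.
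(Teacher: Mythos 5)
Your proposal is correct and follows the paper's own proof exactly: both deduce \eqref{neq4.2} by observing that $|y|^{2\sigma-2m}v(y)$ is the $\sigma$-Kelvin transform of $u$ and applying \eqref{eq1.5.2} with $m$ replaced by $\sigma$ together with \eqref{eq4.2}. For \eqref{neq4.3} the paper simply calls it a ``straight-forward exercise''; your Leibniz/Fa\`a di Bruno accounting, including the observation that the $|x|^{2-n-|\alpha|}$ bound from \eqref{eq4.3} exactly cancels the $|y|^{-|\alpha|}$ growth coming from the derivatives of $y\mapsto y/|y|^2$, is a correct filling-in of that exercise.
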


Note that in Theorems \ref{thm1.3} and \ref{thm1.4} we do not require that
$m$ and $n$ satisfy \eqref{eq1.3}.

Inequality \eqref{neq4.2} gives one sided estimates for
$\Delta^\sigma(|y|^{2\sigma-2m}v(y))$. Sometimes one sided estimates
for $\Delta^\sigma v$ also hold. For example, in the important
case $m=2$, $n=2$ or 3, and the singularity is at the infinity, we
have the following corollary of Theorem \ref{thm1.4}.

\begin{cor}\label{cor1.1}
Let $v(y)$ be a $C^4$ nonnegative solution of 
\[
-\Delta^2 v \ge 0 \quad \text{in} \quad {\bb R}^n - B_{1/2}(0)
\]
where $n=2$ or $3$. Then 
\begin{equation}\label{eq1.14.1}
v(y)=O\left(\Gamma_\infty(|y|)\right) \quad\text{and} \quad 
|\nabla v(y)|=O\left(\left|\frac{d}{d|y|} 
\Gamma_\infty(|y|)\right|\right) \quad \text{as} \quad |y|\to\infty 
\end{equation}
and
\begin{equation}\label{eq1.14.2}
-\Delta v(y)<C\left|\frac{d^2}{d|y|^2} 
\Gamma_\infty(|y|)\right|\quad \text{for} \quad |y|>1
\end{equation}
where $\Gamma_\infty$ is given by \eqref{eq1.71} and $C$ is  a 
positive constant independent of $y$.
\end{cor}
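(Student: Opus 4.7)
The plan is to read Corollary \ref{cor1.1} as a direct specialization of Theorem \ref{thm1.4} with $m=2$ and $n\in\{2,3\}$, followed by a short algebraic manipulation to convert the one-sided estimate on $\Delta^\sigma(|y|^{2\sigma-2m}v)$ at $\sigma=1$ into a one-sided estimate on $-\Delta v$ itself. First, observe that for $m=2$ and $n\in\{2,3\}$ we are in the regime $n<2m$, and the bound \eqref{neq4.4} reads $|\beta|\le 2m-n=1$ when $n=3$ and $|\beta|\le 2m-n-1=1$ when $n=2$. Therefore \eqref{neq4.3} from Theorem \ref{thm1.4} applies for $|\beta|=0$ and $|\beta|=1$, which yields both pieces of \eqref{eq1.14.1} at once.

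For \eqref{eq1.14.2} I would start from \eqref{neq4.2} with $\sigma=1$, $m=2$: since $(-1)^{m+\sigma}=(-1)^3=-1$, this reads
\[
-\Delta\bigl(|y|^{-2}v(y)\bigr) \le C|y|^{-2} \qquad\text{for }|y|>1.
\]
Expanding by the product rule and using $\nabla|y|^{-2}=-2|y|^{-4}y$ together with $\Delta|y|^{\alpha}=\alpha(\alpha+n-2)|y|^{\alpha-2}$ (so $\Delta|y|^{-2}=(8-2n)|y|^{-4}$), this becomes
\[
-|y|^{-2}\Delta v + 4|y|^{-4}\,y\cdot\nabla v - (8-2n)|y|^{-4}v \le C|y|^{-2},
\]
so that after multiplying by $|y|^2$
\[
-\Delta v(y) \le C + (8-2n)|y|^{-2}v(y) - 4|y|^{-2}\,y\cdot\nabla v(y).
\]
The task is then to show the right-hand side is $O\bigl(|d^2/d|y|^2\Gamma_\infty(|y|)|\bigr)$, using \eqref{eq1.14.1}, which has already been established.

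When $n=3$ one has $\Gamma_\infty(|y|)=|y|^2$, so \eqref{eq1.14.1} gives $v=O(|y|^2)$ and $|\nabla v|=O(|y|)$; thus $|y|^{-2}v$ and $|y|^{-2}|y\cdot\nabla v|$ are both $O(1)$, matching the constant $|d^2/d|y|^2\Gamma_\infty|=2$. When $n=2$ one has $\Gamma_\infty(|y|)=|y|^2\log(5|y|)$, so $v=O(|y|^2\log 5|y|)$ and $|\nabla v|=O(|y|\log 5|y|)$, giving $|y|^{-2}v$ and $|y|^{-2}|y\cdot\nabla v|$ of order $\log 5|y|$, which matches $|d^2/d|y|^2\Gamma_\infty|=2\log 5|y|+3$. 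In both cases the desired bound \eqref{eq1.14.2} follows. The only step that requires any care is the $n=2$ case, where I have to verify that the logarithmic factors produced by substituting the zeroth- and first-order bounds on $v$ do not exceed the logarithmic growth of $|d^2/d|y|^2\Gamma_\infty|$; this is the spot where the argument could fail if the derivative bounds in Theorem \ref{thm1.4} were any weaker, but since the bounds there are sharp (controlled by radial derivatives of $\Gamma_\infty$), the factors line up and no further work is needed.
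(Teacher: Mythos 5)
Your proposal is correct and follows essentially the same route as the paper: invoke Theorem \ref{thm1.4} (via \eqref{neq4.3} with $|\beta|\le 1$) for \eqref{eq1.14.1}, take \eqref{neq4.2} with $m=2$, $\sigma=1$ to get $-\Delta(|y|^{-2}v)\le C|y|^{-2}$, expand by the product rule, and absorb the terms involving $v$ and $\nabla v$ using \eqref{eq1.14.1}. The only cosmetic difference is that you multiply through by $|y|^{2}$ and track explicit constants, while the paper works directly with $-|y|^{-2}\Delta v$; the substance is identical.
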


The proof of Theorem~\ref{thm1.3} relies heavily on a representation
formula for $C^{2m}$ nonnegative solutions $u$ of \eqref{eq1.2}, which
we state and prove in Section~\ref{sec3}. This formula, which is valid
for all integers $n\ge 2$ and $m\ge 1$ and which when $m=1$ is
essentially a result of Brezis and Lions \cite{BL}, may also be useful
for studying nonnegative solutions in a punctured neighborhood of the
origin---or near $x=\infty$ via the $m$-Kelvin transform---of problems of
the form
\begin{equation}\label{eq1.8}
 -\Delta^m u = f(x,u)\quad\text{or}\quad 0\le - \Delta^mu \le f(x,u)
\end{equation}
when $f$ is a nonnegative function and $m$ and $n$ may or may not
satisfy \eqref{eq1.3}. Examples of such problems can
be found in \cite{CMS,CX,GL,H,MR,WX,X} and elsewhere.

Pointwise estimates at $x=\infty$ of solutions $u$ of problems
\eqref{eq1.8} can be crucial for proving existence results for entire
solutions of \eqref{eq1.8} which in turn can be used to obtain, via
scaling methods, existence and estimates of solutions of boundary
value problems associated with \eqref{eq1.8}, see
e.g. \cite{RW1,RW2}. An excellent reference for polyharmonic boundary
value problems is \cite{GGS}.

Lastly, weak solutions of $\Delta^mu = \mu$, where
$\mu$ is a measure on a subset of ${\bb R}^n$, have been studied in
\cite{CDM} and \cite{FKM}, and removable isolated singularities of 
$\Delta^mu=0$ have been studied in \cite{H}.

\section{Preliminary results}\label{sec2}

\indent 

In this section we state and prove four lemmas. Lemmas~\ref{lem2.1},
\ref{lem2.2}, and \ref{lem2.3} will only be used to prove
Lemma~\ref{lem2.4}, which in turn will be used in Section~\ref{sec3}
to prove Theorem~\ref{thm3.1}.

Lemmas \ref{lem2.1} and \ref{lem2.2} are well-known. We include their
very short proofs for the convenience of the reader.

\begin{lem}\label{lem2.1}
  Let $f\colon (0,r_2]\to [0,\infty)$ be a continuous function where
  $r_2$ is a finite positive constant. Suppose $n\ge 2$ is an integer
  and the equation
\begin{equation}\label{eq2.1}
v'' + \frac{n-1}r v' = -f(r)\qquad 0<r<r_2
\end{equation}
has a nonnegative solution $v(r)$. Then
\begin{equation}\label{eq2.2}
 \int^{r_2}_0 r^{n-1} f(r)\,dr < \infty.
\end{equation}
\end{lem}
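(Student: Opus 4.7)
The plan is to recognize the left-hand side of \eqref{eq2.1} as the radial Laplacian in $\mathbb{R}^n$, rewrite it in divergence form, integrate once, and then derive a contradiction with the nonnegativity of $v$ by assuming \eqref{eq2.2} fails.

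First I would multiply \eqref{eq2.1} by $r^{n-1}$ to put it in the form $(r^{n-1}v'(r))' = -r^{n-1}f(r)$. Integrating this identity from $r$ to $r_2$ yields
\[
r^{n-1}v'(r) \;=\; r_2^{n-1}v'(r_2) \;+\; \int_r^{r_2} s^{n-1}f(s)\,ds
\]
for every $r\in (0,r_2)$. Note that $f\ge 0$ guarantees that the integral is a monotone increasing quantity as $r\to 0^+$, so it has a well-defined limit in $[0,\infty]$.

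Next I would argue by contradiction: suppose $\int_0^{r_2} s^{n-1}f(s)\,ds=\infty$. Then the right-hand side of the displayed identity tends to $+\infty$ as $r\to 0^+$, so for every $K>0$ there exists $r_K\in (0,r_2)$ with $r^{n-1}v'(r)\ge K$ for all $r\in(0,r_K]$. Equivalently, $v'(r)\ge K\,r^{1-n}$ on $(0,r_K]$. Integrating this lower bound from $r$ to $r_K$ gives, for $n\ge 3$,
\[
v(r_K)-v(r)\;\ge\; \frac{K}{n-2}\bigl(r^{2-n}-r_K^{2-n}\bigr),
\]
and for $n=2$, the analogous bound $v(r_K)-v(r)\ge K\log(r_K/r)$. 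In either case the right-hand side tends to $+\infty$ as $r\to 0^+$, forcing $v(r)\to -\infty$, which contradicts $v\ge 0$. Hence \eqref{eq2.2} must hold.

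The main obstacle is essentially cosmetic: one must be careful about where $v$ is assumed to be defined and differentiable (one needs $v'(r_2)$ to make sense, which follows from \eqref{eq2.1} being satisfied up to $r_2$ and the continuity of $f$ at $r_2$), and one must handle the cases $n=2$ and $n\ge 3$ separately in the final integration step because the antiderivative of $r^{1-n}$ is of a different form. Otherwise the argument is a direct application of the integrating-factor method for the radial Laplacian combined with a straightforward contradiction.
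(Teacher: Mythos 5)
Your proof is correct and takes essentially the same route as the paper's: write $(r^{n-1}v')'=-r^{n-1}f$, integrate, observe that if $\int_0^{r_2}\rho^{n-1}f\,d\rho=\infty$ then $r^{n-1}v'(r)$ is eventually bounded below by a positive constant near $0$, and integrate $v'(r)\ge c\,r^{1-n}$ to force $v\to-\infty$, contradicting $v\ge 0$. The only (cosmetic) difference is that the paper integrates from $r$ to $r_1=r_2/2$ rather than to $r_2$, thereby avoiding the question of whether $v'(r_2)$ exists --- as you yourself note, the hypothesis only asserts the ODE on the open interval $(0,r_2)$ --- so simply replace $r_2$ by a fixed $r_1\in(0,r_2)$ in your integration and the argument matches the paper's exactly.
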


\begin{proof}
Let $r_1=r_2/2$. Integrating \eqref{eq2.1} we obtain
\begin{equation}\label{eq2.3}
r^{n-1}v'(r) = r^{n-1}_1 v'(r_1) + \int^{r_1}_r \rho^{n-1}f(\rho)\,d\rho 
\quad \text{for}\quad 0<r<r_1.
\end{equation}
Suppose for contradiction that
\[
 r^{n-1}_1v'(r_1) + \int^{r_1}_{r_0} \rho^{n-1}f(\rho)\,d\rho \ge 1
\quad \text{for some} \quad r_0\in (0,r_1).
\]
Then for $0<r<r_0$ we have by \eqref{eq2.3} that
\[
 v(r_0) - v(r) \ge \int^{r_0}_r \rho^{1-n}\,d\rho \to \infty
\quad \text{as}\quad r\to 0^+
\]
which contradicts the nonnegativity of $v(r)$.
\end{proof}

\begin{lem}\label{lem2.2}
  Suppose $f\colon (0,R]\to {\bb R}$ is a continuous function, $n\ge
  2$ is an integer, and
\begin{equation}\label{eq2.4}
\int^R_0 \rho^{n-1}|f(\rho)|\,d\rho < \infty.
\end{equation}
Define $u_0\colon (0,R]\to {\bb R}$ by
\[
 u_0(r)= \begin{cases}
          \dfrac1{n-2} \left[\dfrac1{r^{n-2}} 
{\ds\int^r_0} \rho^{n-1}f(\rho)\,d\rho 
+ {\ds\int^R_r} \rho f(\rho)\,d\rho\right]&\text{if $n\ge 3$}\\
\noalign{\medskip}
\left(\log \dfrac{2R}r\right) {\ds\int^r_0} \rho f(\rho)\,d\rho 
+ {\ds\int^R_r} \rho\left(\log \dfrac{2R}\rho\right) f(\rho)\,d\rho
&\text{if $n=2$}.
         \end{cases}
\]
Then $u = u_0(r)$ is a $C^2$ solution of
\begin{equation}\label{eq2.5}
 -(\Delta u)(r) := -\left(u''(r) + \frac{n-1}r u'(r)\right) 
= f(r)\quad \text{for}\quad 0<r\le R.
\end{equation}
Moreover, all solutions $u(r)$ of \eqref{eq2.5} are such that
\begin{equation}\label{eq2.6}
 \int^r_0 \rho^{n-1}|u(\rho)|\,d\rho = 
\begin{cases}
O(r^2)&\text{as $r\to 0^+$ if $n\ge 3$}\\
\noalign{\medskip}
O\left(r^2 \log \dfrac1r\right)&\text{as $r\to 0^+$ if $n=2$.}
\end{cases}
\end{equation}
\end{lem}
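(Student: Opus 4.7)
The plan is a direct verification of the formula, followed by a clean reduction-to-cases argument for the integral bound.

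First I would verify that $u_0$ solves \eqref{eq2.5}, observing that the equation is equivalent to $(r^{n-1}u'(r))'=-r^{n-1}f(r)$. Differentiating the formula for $u_0$ by the Leibniz rule, the boundary contributions cancel and one obtains $r^{n-1}u_0'(r) = -\int_0^r \rho^{n-1}f(\rho)\,d\rho$ (in both the $n\ge 3$ and $n=2$ case); a second differentiation then gives \eqref{eq2.5}. The $C^2$ regularity of $u_0$ is immediate from the continuity of $f$ on $(0,R]$ together with the integrability assumption \eqref{eq2.4}.

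For the ``moreover'' part, note that any solution $u$ of \eqref{eq2.5} satisfies $u - u_0 = A + B\Gamma_0$ for some constants $A,B\in\bb R$, since the homogeneous radial equation $v''+(n-1)v'/r=0$ has general solution $A+B\Gamma_0(r)$. By the triangle inequality it therefore suffices to establish \eqref{eq2.6} separately for $u\equiv 1$, $u=\Gamma_0$, and $u=u_0$. The first two cases reduce to direct computation: $\int_0^r \rho^{n-1}\,d\rho = r^n/n$, and $\int_0^r \rho^{n-1}|\Gamma_0(\rho)|\,d\rho$ evaluates to $O(r^2)$ when $n\ge 3$ and to $O(r^2\log(1/r))$ when $n=2$.

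The substantive step is the pointwise estimate on $u_0$. Set $M := \int_0^R \rho^{n-1}|f(\rho)|\,d\rho$, which is finite by hypothesis. In the case $n\ge 3$, the first bracket term in the formula for $u_0(r)$ is bounded in absolute value by $M/((n-2)r^{n-2})$; for the second term I would apply the inequality $\rho \le \rho^{n-1}r^{-(n-2)}$, valid for $\rho\ge r$, to bound it by the same quantity. Thus $|u_0(r)|=O(r^{-(n-2)})$, and multiplying by $\rho^{n-1}$ and integrating from $0$ to $r$ yields $O(r^2)$. In the case $n=2$, exploiting the monotonicity of $\log(2R/\rho)$ in $\rho$ gives $|u_0(r)| \le 2M\log(2R/r)$, and integrating $\rho\log(2R/\rho)$ from $0$ to $r$ produces $O(r^2\log(1/r))$.

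The calculations are elementary throughout; the only mild subtlety I anticipate is the outer-integral estimate for $n\ge 3$, where one must convert $\rho\,|f(\rho)|\,d\rho$ back into $\rho^{n-1}|f(\rho)|\,d\rho$ at the cost of a factor $r^{-(n-2)}$, using precisely that $\rho\ge r$ on the range of integration. Once this bound on $|u_0(r)|$ is in hand the rest is bookkeeping.
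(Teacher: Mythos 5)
Your proposal is correct and follows essentially the same route as the paper: verify directly that $u_0$ solves \eqref{eq2.5}, establish the pointwise asymptotic $|u_0(r)|=O(r^{2-n})$ (resp.\ $O(\log(1/r))$ for $n=2$), and conclude by decomposing a general solution as $u_0$ plus a linear combination of the two homogeneous solutions. The paper states these steps tersely (``it is easy to check''); you simply fill in the bookkeeping, including the useful observation $\rho\le\rho^{n-1}r^{-(n-2)}$ on $\rho\ge r$ for bounding the tail integral when $n\ge 3$.
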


\begin{proof}
  By \eqref{eq2.4} the formula for $u_0(r)$ makes sense and it is easy
  to check that $u=u_0(r)$ is a solution of \eqref{eq2.5} and, as
  $r\to 0^+$,
\[
 u_0(r) = \begin{cases}
           O(r^{2-n})&\text{if $n\ge 3$}\\
\noalign{\medskip}
O\left(\log \dfrac1r\right)&\text{if $n=2$.}
          \end{cases}
\]
Thus, since all solutions of \eqref{eq2.5} are given by
\[
 u= u_0(r) + C_1+C_2 
\begin{cases}
 r^{2-n}&\text{if $n\ge 3$}\\
\noalign{\medskip}
\log \dfrac1r&\text{if $n=2$}
\end{cases}
\]
where $C_1$ and $C_2$ are arbitrary constants, we see that all
solutions of \eqref{eq2.5} satisfy \eqref{eq2.6}.
\end{proof}

\begin{lem}\label{lem2.3}
  Suppose $f\colon (0,R]\to {\bb R}$ is a continuous function, $n\ge
  2$ is an integer, and
\begin{equation}\label{eq2.7}
\intl_{x\in B_R(0)\subset{\bb R}^n} |f(|x|)| \,dx < \infty.
\end{equation}
If $u = u(|x|)$ is a radial solution of
\begin{equation}\label{eq2.8}
 -\Delta^m u = f\quad \text{for}\quad 0<|x|\le R,\quad  m\ge 1
\end{equation}
then
\begin{equation}\label{eq2.9}
\intl_{|x|<r}  |u(x)|\,dx =
\begin{cases}
 O(r^2)&\text{as $r\to 0^+$ if $n\ge 3$}\\
\noalign{\medskip}
O\left(r^2 \log \dfrac1r\right)&\text{as $r\to 0^+$ if $n=2$.}
\end{cases}
\end{equation}
\end{lem}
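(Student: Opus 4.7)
The plan is a straightforward induction on $m$, iterating Lemma~\ref{lem2.2}. Passing to polar coordinates, the volume integrability hypothesis \eqref{eq2.7} is equivalent to $\int_0^R \rho^{n-1}|f(\rho)|\,d\rho < \infty$, and the conclusion \eqref{eq2.9} is, up to the factor $\omega_{n-1}$ coming from the surface area of $S^{n-1}$, exactly the bound \eqref{eq2.6}. Thus Lemma~\ref{lem2.3} amounts to the assertion that a radial $C^{2m}$ solution of $-\Delta^m u = f$ on $0<|x|\le R$, with $f$ continuous and weighted-integrable, satisfies \eqref{eq2.6}. The case $m=1$ is Lemma~\ref{lem2.2} itself.

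For the inductive step $m\ge 2$, I set $w:=-\Delta u$. Because $u$ is radial and $C^{2m}$, the function $w$ is radial, of class $C^{2m-2}$, and continuous on $(0,R]$; a direct computation gives $\Delta^{m-1}w = -\Delta^m u = f$, so
\[
 -\Delta^{m-1}w \;=\; -f,
\]
and hence $w$ is a radial $C^{2(m-1)}$ solution of the same polyharmonic equation but with right-hand side $\tilde f := -f$, which of course satisfies \eqref{eq2.7} since $|\tilde f|=|f|$. The induction hypothesis applied to $w$ then yields \eqref{eq2.9} with $w$ in place of $u$; in particular $\int_0^R \rho^{n-1}|w(\rho)|\,d\rho < \infty$. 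Now $u$ itself is a radial solution of the single Poisson equation $-\Delta u = w$, whose right-hand side meets the hypothesis \eqref{eq2.4} of Lemma~\ref{lem2.2}. The second conclusion of that lemma---the bound \eqref{eq2.6} for \emph{all} solutions---translates back via polar coordinates into the desired bound \eqref{eq2.9} on $\int_{|x|<r}|u(x)|\,dx$.

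I do not anticipate a real obstacle here. The structural reason the induction closes cleanly is that the asymptotic shape produced by Lemma~\ref{lem2.2}, namely $O(r^2)$ (or $O(r^2\log(1/r))$ when $n=2$), is \emph{independent} of the size of the weighted $L^1$-norm of the right-hand side; only its finiteness matters, and that finiteness is exactly what the previous stage of the induction supplies for the next. The bookkeeping items---continuity of each iterated Laplacian on $(0,R]$, and the irrelevance of the sign of $\tilde f$ since Lemma~\ref{lem2.2} is stated for real-valued right-hand sides---are immediate. Note that Lemma~\ref{lem2.1} plays no role in this argument; it will be needed only later, in the proof of Lemma~\ref{lem2.4}.
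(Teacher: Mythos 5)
Your proof is correct and essentially the same as the paper's: both are inductions on $m$ anchored in Lemma~\ref{lem2.2}, with the inductive step consisting of one application of Lemma~\ref{lem2.2} and one of the inductive hypothesis. The only difference is the order---you peel off the innermost Laplacian by setting $w=-\Delta u$ and apply the inductive hypothesis to $w$ before applying Lemma~\ref{lem2.2} to $u$, whereas the paper peels off the outermost by setting $g=-\Delta^{m-1}u$, applies Lemma~\ref{lem2.2} to get $g\in L^1(B_R(0))$, and then invokes the inductive hypothesis for $u$; both orderings close for exactly the reason you identify, that only the finiteness of the weighted $L^1$ norm enters, not its size.
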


\begin{proof}
  The lemma is true for $m=1$ by Lemma~\ref{lem2.2}. Assume,
  inductively, that the lemma is true for $m-1$ where $m\ge 2$. Let
  $u$ be a radial solution of \eqref{eq2.8}. Then
\[
 -\Delta(\Delta^{m-1}u) = -\Delta^mu = f\quad \text{for}\quad 0<|x|\le R.
\]
Hence by \eqref{eq2.7} and Lemma \ref{lem2.2},
\[
 g := -\Delta^{m-1}u \in L^1(B_R(0)).
\]
So by the inductive assumption, \eqref{eq2.9} holds.
\end{proof}

\begin{lem}\label{lem2.4}
  Suppose $f\colon \ovl{B_R(0)} - \{0\}\to {\bb R}$ is a nonnegative
  continuous function and $u$ is a $C^{2m}$ solution of
\begin{equation}\label{eq2.10}
 \left.\begin{array}{r} -\Delta^mu=f\\ u\ge 0\end{array}\right\} 
\quad \text{in} \quad \ovl{B_R(0)} - \{0\} \subset {\bb R}^n, 
\quad n\ge 2,\quad m\ge 1.
\end{equation}
Then
\begin{align}\label{eq2.11}
 &\intl_{|x|<r} u(x)\,dx = \begin{cases}
                          O(r^2)&\text{as $r\to 0^+$ if $n\ge 3$}\\
\noalign{\medskip}
O\left(r^2\log \dfrac1r\right)&\text{as $r\to 0^+$ if $n=2$}\end{cases}\\
\intertext{and}
\label{eq2.12}
&\intl_{|x|<R} |x|^{2m-2} f(x)\,dx < \infty.                         
\end{align}
\end{lem}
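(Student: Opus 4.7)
The plan is to first reduce to the radial case via spherical averaging, then proceed by induction on $m$. Set $\bar u(r) := |\partial B_r|^{-1}\int_{\partial B_r} u\,dS$ and define $\bar f(r)$ analogously. Both are nonnegative and continuous on $(0,R]$, $\bar u$ is $C^{2m}$ there, and since $\Delta^m$ commutes with spherical averaging, $-\Delta^m\bar u = \bar f$ on $(0,R]$. By Fubini, the two integrals in \eqref{eq2.11} and \eqref{eq2.12} are unchanged when $(u,f)$ is replaced by $(\bar u,\bar f)$, so it suffices to treat the case where $u$ and $f$ are radial functions of $r=|x|$.

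The base case $m=1$ follows directly from the preceding lemmas. Lemma~\ref{lem2.1} applied with $v=u$ and source $f$ on $(0,r_2]$ for any $r_2\in (0,R)$ yields $\int_0^{r_2} r^{n-1}f(r)\,dr < \infty$; continuity of $f$ on $[r_2,R]$ extends this to $\int_0^R r^{n-1}f\,dr < \infty$, which is \eqref{eq2.12} when $m=1$ (since $|x|^{2m-2}\equiv 1$). The hypothesis \eqref{eq2.4} of Lemma~\ref{lem2.2} thereby holds, and the conclusion \eqref{eq2.6} for the radial solution $u$ of $-\Delta u=f$ is exactly \eqref{eq2.11}.

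For the inductive step, assume the lemma through order $m-1$ and let $(u,f)$ be a radial pair satisfying the hypothesis for $m\ge 2$. The natural choice $g:=\Delta^{m-1}u$ satisfies $-\Delta g = f\ge 0$, so Lemma~\ref{lem2.1} would apply to $(g,f)$ at once if $g$ were nonnegative; but in general $g$ has no a priori sign. This is the main obstacle. To circumvent it I would exploit the radial ODE identity
$$(r^{n-1}g'(r))' = r^{n-1}\Delta g(r) = -r^{n-1}f(r)\le 0,$$
which forces $r\mapsto r^{n-1}g'(r)$ to be nonincreasing, and then iterate the analogous identities $(r^{n-1}(\Delta^k u)')' = r^{n-1}\Delta^{k+1}u$ downward through $k=m-1,m-2,\ldots,0$. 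This expresses $u$ as an $m$-fold iterated integral of $f$ plus a linear combination of the radial polyharmonic basis functions $r^{2j}$ and $r^{2j+2-n}$ for $j=0,\ldots,m-1$ (with logarithmic analogues when $n=2$). The nonnegativity $u\ge 0$ constrains the coefficients of the singular basis elements, and combining these constraints with the monotonicity of $r^{n-1}g'$ forces the weighted integrability $\int_0^R r^{n+2m-3}f(r)\,dr < \infty$ (the weight $r^{n+2m-3}$ arises from the factor $r^{2m-2}$ picked up by layering the $m-1$ additional $r^{1-n}$/$r^{n-1}$ integrations on top of the base integral in Lemma~\ref{lem2.1}), which is \eqref{eq2.12}. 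Finally, \eqref{eq2.11} follows by feeding \eqref{eq2.12} into the same iterated-integral representation of $u$ and invoking Lemma~\ref{lem2.2} at the innermost level, whose $O(r^2)$ (resp.\ $O(r^2\log(1/r))$) local $L^1$ bound is preserved through each outer integration.
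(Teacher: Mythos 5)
Your reduction to the radial case and your treatment of the base case $m=1$ match the paper exactly, and you correctly identify the central obstacle: $g=\Delta^{m-1}u$ carries no a priori sign. The difficulty is in how you propose to get around it.

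The paper's inductive step is much cleaner than what you sketch, and it sidesteps the representation machinery entirely. It first disposes of the easy case $\int_0^R\rho^{n-1}f\,d\rho<\infty$, where \eqref{eq2.12} is trivial and \eqref{eq2.11} follows from Lemma~\ref{lem2.3}. In the remaining case the divergence of that integral, fed into the flux identity $r^{n-1}v'(r)=r_2^{n-1}v'(r_2)+\int_r^{r_2}\rho^{n-1}f\,d\rho$ with $v=\Delta^{m-1}u$, forces $v'(r_1)\ge 1$ for some small $r_1$, and integrating once more shows $-v(r)>\int_r^{r_0}\rho^{1-n}\int_\rho^{r_0}s^{n-1}f\,ds\,d\rho\ge 0$ near the origin. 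Now the pair $(u,-v)$ satisfies the hypotheses of the lemma \emph{at order $m-1$}, so the inductive hypothesis gives \eqref{eq2.11} immediately and supplies the finiteness $\int_{|x|<r_0}|x|^{2m-4}(-v)\,dx<\infty$, which, after an interchange of integration order against the explicit lower bound for $-v$, yields \eqref{eq2.12}. The crucial move you are missing is this re-application of the lemma itself to $(u,-v)$ once nonnegativity of $-v$ near the origin is established.

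Two concrete gaps in your sketch. First, the assertion that ``the nonnegativity $u\ge 0$ constrains the coefficients of the singular basis elements, and combining these constraints with the monotonicity of $r^{n-1}g'$ forces the weighted integrability'' is stated but not argued; this is the entire content of the inductive step and cannot be left as a claim, especially since the coefficient bookkeeping for the $2m$-fold iterated integral plus $2m$ basis functions (with logarithmic degeneracies for even $n\le 2m$) is far from transparent. Second, your closing step is logically off: \eqref{eq2.12} is a \emph{weaker} integrability condition than $\int_0^R\rho^{n-1}f\,d\rho<\infty$ near the origin (the weight $|x|^{2m-2}\le 1$ there for $m\ge 2$), so you cannot ``feed \eqref{eq2.12} into the representation and invoke Lemma~\ref{lem2.2} at the innermost level''---Lemma~\ref{lem2.2} requires the unweighted $L^1$ condition \eqref{eq2.4}, which you have not established in the hard case. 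The paper avoids this entirely by getting \eqref{eq2.11} from the inductive hypothesis rather than from the representation of $u$.
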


\begin{proof}
  By averaging \eqref{eq2.10} we can assume $f=f(|x|)$ and $u=u(|x|)$
  are radial functions. The lemma is true for $m=1$ by
  Lemmas~\ref{lem2.1} and \ref{lem2.2}. Assume inductively that the
  lemma is true for $m-1$, where $m\ge 2$. Let $u = u(|x|)$ be a
  radial solution of \eqref{eq2.10}. Let $v=\Delta^{m-1}u$. Then
  $-\Delta v = -\Delta^mu=f$ and integrating this equation we obtain
  as in the proof of Lemma~\ref{lem2.1} that
\begin{equation}\label{eq2.13}
 r^{n-1}v'(r) = r^{n-1}_2v'(r_2) + \int^{r_2}_r \rho^{n-1}f(\rho)\,d\rho
\quad \text{for all}\quad 0<r<r_2\le R.
\end{equation}

We can assume 
\begin{equation}\label{eq2.14}
 \int^R_0 \rho^{n-1}f(\rho)\,d\rho = \infty
\end{equation}
for otherwise $\intl_{|x|<R} f(x)\,dx < \infty$ and hence
\eqref{eq2.12} obviously holds and \eqref{eq2.11} holds by Lemma
\ref{lem2.3}. By \eqref{eq2.13} and \eqref{eq2.14} we have for some
$r_1\in (0,R)$ that
\begin{equation}\label{eq2.15}
 v'(r_1)\ge 1.
\end{equation}
Replacing $r_2$ with $r_1$ in \eqref{eq2.13} we get
\[
 v'(\rho) = \frac{r^{n-1}_1 v'(r_1)}{\rho^{n-1}} 
+ \frac1{\rho^{n-1}} \int^{r_1}_\rho s^{n-1}f(s)\,ds
\quad \text{for}\quad 0<\rho\le r_1
\]
and integrating this equation from $r$ to $r_1$ we obtain for $0<r\le
r_1$ that
\[
 -v(r) = -v(r_1) + r^{n-1}_1v'(r_1) \int^{r_1}_r \frac1{\rho^{n-1}}
 \,d\rho 
+ \int^{r_1}_r \frac1{\rho^{n-1}} \int^{r_1}_\rho s^{n-1} f(s)\,ds\,d\rho
\]
and hence by \eqref{eq2.15} for some $r_0\in (0,r_1)$ we have
\[
 -\Delta^{m-1}u(r) = -v(r) 
> \int^{r_0}_r \frac1{\rho^{n-1}} \int^{r_0}_\rho s^{n-1}f(s)\,ds \,d\rho 
\ge 0\quad \text{for}\quad 0<r\le r_0.
\]
So by the inductive assumption, $u$ satisfies \eqref{eq2.11} and 
\begin{align*}
\infty &> \frac1{n\omega_n} \intl_{|x|<r_0} |x|^{2m-4}(-v(|x|))\,dx\\
&= \int^{r_0}_0 r^{2m+n-5} (-v(r))\,dr\\
&\ge \int^{r_0}_0 r^{2m+n-5} \left(\int^{r_0}_r 
\frac1{\rho^{n-1}} \int^{r_0}_\rho s^{n-1} f(s)\,ds\,d\rho\right)dr\\
&= C\int^{r_0}_0 s^{2m-2} f(s)s^{n-1}ds\\
&= C\intl_{|x|<r_0} |x|^{2m-2} f(x)\,dx
\end{align*}
where in the above calculation we have interchanged the order of
integration and $C$ is a positive constant which depends only on $m$
and $n$. This completes the inductive proof.
\end{proof}

\section{Representation formula}\label{sec3}

\indent

A fundamental solution of $\Delta^m$ in ${\bb R}^n$, where $n\ge 2$
and $m\ge 1$ are integers, is given by
\begin{numcases}{\Phi(x):=a}
(-1)^m |x|^{2m-n}, & if $2 \le 2m < n$ \label{eq3.1}\\
(-1)^{\frac{n-1}2}|x|^{2m-n}, & if $3\le n < 2m$ and  $n$ is odd \label{eq3.2}\\
(-1)^{\frac{n}2} |x|^{2m-n} \log \frac5{|x|}, & if $2\le n \le 2m$ 
 and $n$ is even \label{eq3.3}
\end{numcases}
where $a = a(m,n)$ is a \emph{positive} constant. In the sense of
distributions, $\Delta^m\Phi=\delta$, where $\delta$ is the Dirac mass
at the origin in ${\bb R}^n$. For $x\ne 0$ and
$y\ne x$, let
\begin{equation}\label{eq3.4}
 \Psi(x,y) = \Phi(x-y) - \sum_{|\alpha|\le 2m-3} 
\frac{(-y)^\alpha}{\alpha!} D^\alpha\Phi(x)
\end{equation}
be the error in approximating $\Phi(x-y)$ with the partial sum of
degree $2m-3$ of the Taylor series of $\Phi$ at $x$.

The following theorem gives representation formula \eqref{eq3.6} for
nonnegative solutions of inequality \eqref{eq3.5}.

\begin{thm}\label{thm3.1}
Let $u(x)$ be a $C^{2m}$ nonnegative solution of
\begin{equation}\label{eq3.5}
 -\Delta^mu \ge 0\quad \text{in}\quad B_2(0)-\{0\}\subset {\bb R}^n,
\end{equation}
where $n\ge 2$ and $m\ge 1$ are integers. Then
\begin{equation}\label{eq3.6}
 u = N+h+ \sum_{|\alpha|\le 2m-2} a_\alpha D^\alpha\Phi\quad 
\text{ in}\quad B_1(0)-\{0\}
\end{equation}
where $a_\alpha, |\alpha|\le 2m-2$, are constants, $h\in
C^\infty(B_1(0))$ is a solution of
\[
 \Delta^mh = 0\quad \text{in}\quad B_1(0),
\]
and
\begin{equation}\label{eq3.7}
 N(x) = \intl_{|y|\le 1} \Psi(x,y) \Delta^mu(y)\,dy\quad \text{for}\quad x\ne 0.
\end{equation}
\end{thm}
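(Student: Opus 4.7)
The plan is to write $u$ as a sum of a regularized Newton-type potential $N$ (which carries the source $-\Delta^m u$), explicit singular terms $\sum a_\alpha D^\alpha\Phi$ absorbing the isolated singularity, and a smooth polyharmonic correction $h$. The argument splits into three stages: show $N$ is well-defined and smooth away from the origin; verify $\Delta^m N=\Delta^m u$ on $B_1(0)-\{0\}$, so $w:=u-N$ is polyharmonic there; and classify the singularity of $w$ at the origin.

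First, observe that the sum subtracted in \eqref{eq3.4} is exactly the degree-$(2m-3)$ Taylor polynomial of the map $y\mapsto\Phi(x-y)$ expanded at $y=0$. Taylor's theorem then gives the pointwise bound $|\Psi(x,y)|\le C(x)|y|^{2m-2}$ on the region $|y|\le|x|/2$ (with $C(x)$ controlled by $\sup|D^{2m-2}\Phi|$ on an annulus around $x$ that avoids $0$). On the complementary region $\{|y|\le 1,\,|y|\ge|x|/2\}$ both pieces of $\Psi(x,\cdot)$ are locally integrable in $y$. Combined with the integrability $\int_{|y|<1}|y|^{2m-2}|\Delta^m u(y)|\,dy<\infty$ supplied by Lemma~\ref{lem2.4}, this shows that \eqref{eq3.7} converges absolutely and $N\in C^\infty(B_1(0)-\{0\})$ (differentiation under the integral sign being routine).

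Second, I would pair $N$ against a test function $\varphi\in C_c^\infty(B_1(0)-\{0\})$ and apply Fubini (justified by the bounds above). The identities $\int\Phi(x-y)\,\Delta^m\varphi(x)\,dx=\varphi(y)$ (from $\Delta^m\Phi=\delta$) and $\int D^\alpha\Phi(x)\,\Delta^m\varphi(x)\,dx=(-1)^{|\alpha|}D^\alpha\varphi(0)=0$ (the last equality holding because $0\notin\operatorname{supp}\varphi$) collapse the inner integral to $\varphi(y)$, yielding $\int N\,\Delta^m\varphi\,dx=\int\Delta^m u\cdot\varphi\,dy$. Hence $w=u-N$ is polyharmonic on $B_1(0)-\{0\}$. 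Since $u\in L^1_{\mathrm{loc}}(B_1(0))$ by Lemma~\ref{lem2.4} and $N\in L^1_{\mathrm{loc}}(B_1(0))$ by the same Fubini-plus-kernel estimate, $w$ extends to a distribution on $B_1(0)$ whose polyharmonic Laplacian is supported at $\{0\}$, forcing $\Delta^mw=\sum_{|\alpha|\le k}c_\alpha D^\alpha\delta_0$ for some finite $k$. Subtracting a matching sum $\sum a_\alpha D^\alpha\Phi$ (using $\Delta^m(D^\alpha\Phi)=D^\alpha\delta_0$) leaves a distribution $h$ with $\Delta^mh=0$ on $B_1(0)$, which elliptic regularity renders smooth, producing \eqref{eq3.6}.

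The main obstacle is justifying the sharp range $|\alpha|\le 2m-2$ in the singular expansion: a naive argument from $w\in L^1_{\mathrm{loc}}$ alone yields only the order bound $|\alpha|\le 2m$. To squeeze out the sharper bound I expect to combine two ingredients—the refined integrability $\int_{|x|<r}u(x)\,dx=O(r^2)$ (or its $n=2$ analogue) from Lemma~\ref{lem2.4}, which improves the distributional order of $\Delta^m u$ at the origin, and the precise calibration of the Taylor degree $2m-3$ in $\Psi$, which ensures that $\langle\Delta^mw,\varphi\rangle$ depends on $\varphi$ at $0$ only through $D^\alpha\varphi(0)$ with $|\alpha|\le 2m-2$. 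Equivalently, $|\alpha|=2m$ is excluded because $D^\alpha\Phi\notin L^1_{\mathrm{loc}}$ while $w\in L^1_{\mathrm{loc}}$, and the borderline case $|\alpha|=2m-1$ must be ruled out using the same refined-integrability input together with the sign information $-\Delta^m u\ge 0$.
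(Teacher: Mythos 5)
Your overall architecture matches the paper's: define the Newton-type potential $N$ via the Taylor-remainder kernel $\Psi$, show $\Delta^m N=\Delta^m u$ off the origin so $w=u-N$ is polyharmonic in $B_1(0)-\{0\}$, classify $\Delta^m w$ as a finite sum of derivatives of $\delta_0$, and peel off the $D^\alpha\Phi$ terms. Your verification that $N$ is well-defined and that $\Delta^m N=\Delta^m u$ (via Fubini and $\Delta^m\Phi=\delta$) is sound, and agrees in substance with what the paper does by differentiating under the integral.

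The genuine gap is precisely where you flag ``the main obstacle'': you do not actually prove $a_\alpha=0$ for $|\alpha|\ge 2m-1$. What the paper uses here is a Brezis--Lions rescaling argument applied to $w=u-N$, and the indispensable input is the quantitative bound
\[
\int_{|x|<r}|w(x)|\,dx = O\!\left(r^2\log\tfrac1r\right)\quad\text{as }r\to 0^+ .
\]
Half of this comes from Lemma~\ref{lem2.4} applied to $u$, as you note. But the other half --- the estimate $\int_{|x|<r}|N(x)|\,dx=O(r^2\log\frac1r)$ --- is nontrivial and is the content of the paper's Lemma~\ref{lem3.1}, which requires a careful two-case analysis of $\int_{|x|<r}|\Psi(x,y)|\,dx$ over $|y|<r$ versus $|y|\ge r$. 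You assert only that $N\in L^1_{\mathrm{loc}}$, which yields merely $o(1)$ and hence only rules out $|\alpha|\ge 2m$, not the target $|\alpha|\ge 2m-1$. Without the sharp rate for $N$, the Brezis--Lions computation
\[
\sum_{|\alpha|\le k}\frac{a_\alpha^2}{\vp^{|\alpha|}}
=\int w\,\Delta^m\varphi_\vp
\;\le\; \frac{C}{\vp^{2m}}\int_{|x|<\vp}|w|\,dx
\]
cannot produce the needed $O(\vp^{-(2m-2)}\log\frac1\vp)$ on the right, and the borderline index $|\alpha|=2m-1$ survives. Also note that your closing suggestion to use the sign condition $-\Delta^m u\ge 0$ to exclude $|\alpha|=2m-1$ is not how the paper proceeds and is not obviously available: at that stage only $w=u-N$ appears, and $w$ has no sign; the refined local $L^1$ rate alone does the job. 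So the missing ingredient is a proof of the $\Psi$-kernel integral estimate (Lemma~\ref{lem3.1}), and the argument as written does not reach the range $|\alpha|\le 2m-2$.
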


When $m=1$, equation \eqref{eq3.6} becomes 
\[
u = N+h+a_0\Phi_1 \quad \text{in} \quad B_1(0)-\{0\},
\]
where
\[
 N(x)= \intl_{|y|<1} \Phi_1(x-y) \Delta u(y)\,dy
\]
and $\Phi_1$ is the fundamental solution of the Laplacian in ${\bb
  R}^n$. Thus, when $m=1$, Theorem~\ref{thm3.1} is essentially a
result of Brezis and Lions \cite{BL}.

Futamura, Kishi, and Mizuta \cite[Theorem 1]{FKM} and \cite[Corollary
5.1]{FM} obtained a result very similar to our Theorem~\ref{thm3.1},
but using their result we would have to let the index of summation
$\alpha$ in \eqref{eq3.4} range over the larger set $|\alpha|\le
2m-2$. This would not suffice for our proof of
Theorem~\ref{thm1.1}. We have however used their idea of using the
remainder term $\Psi(x,y)$ instead of $\Phi(x-y)$ in
\eqref{eq3.7}. This is done so that the integral in \eqref{eq3.7} is
finite. See also the book \cite[p. 137]{HK}.

\begin{proof}[Proof of Theorem \ref{thm3.1}]
By \eqref{eq3.5},
\begin{equation}\label{eq3.8}
f := -\Delta^mu \ge 0\quad \text{in}\quad B_2(0)-\{0\}.
\end{equation}
Thus by Lemma \ref{lem2.4},
\begin{equation}\label{eq3.9}
 \intl_{|x|<1} |x|^{2m-2} f(x)\,dx < \infty
\end{equation}
and
\begin{equation}\label{eq3.10}
 \intl_{|x|<r} u(x)\,dx = O\left(r^2\log \frac1r\right)
\quad \text{as}\quad r\to 0^+.
\end{equation}

If $|\alpha| = 2m-2$ we claim
\begin{equation}\label{eq3.11}
D^\alpha \Phi(x) = O(\Gamma_0(x))\quad \text{as}\quad x\to 0
\end{equation}
where $\Gamma_0(x)$ is given by \eqref{eq1.6}. This is clearly true if
$\Phi$ is given by \eqref{eq3.1} or \eqref{eq3.2} because then $n\ge
3$ and $\Gamma_0(x)=|x|^{2-n}$. The estimate \eqref{eq3.11} is also
true when $\Phi$ is given by \eqref{eq3.3} because then $|x|^{2m-n}$
is a \emph{polynomial} of degree $2m-n\le 2m-2=|\alpha|$ with equality
if and only if $n=2$, and hence $D^\alpha\Phi$ has a term with $\log
\frac5{|x|}$ as a factor if and only if $n=2$. This proves
\eqref{eq3.11}.

By Taylor's theorem and \eqref{eq3.11} we have
\begin{align}\label{eq3.12}
 |\Psi(x,y)| &\le C|y|^{2m-2} \Gamma_0(x)\\
&\le C|y|^{2m-2} |x|^{2-n} \log \frac5{|x|} \quad  
\text{for}\quad |y| < \frac{|x|}2<1.\notag
\end{align}

Differentiating \eqref{eq3.4} with respect to $x$ we get
\begin{equation}\label{eq3.13}
 D^\beta_x(\Psi(x,y)) = (D^\beta\Phi)(x-y) 
-  \sum_{|\alpha|\le 2m-3} \frac{(-y)^\alpha}{\alpha!} 
(D^{\alpha+\beta}\Phi)(x)\quad \text{for}\quad x\ne 0 
\quad \text{and}\quad y\ne x
\end{equation}
and so by Taylor's theorem applied to $D^\beta\Phi$ we have
\begin{equation}\label{eq3.14}
 |D^\beta_x\Psi(x,y)| \le C|y|^{2m-2} |x|^{2-n-|\beta|} 
\log \frac5{|x|}\quad \text{for}\quad |y| < \frac{|x|}2 <1.
\end{equation}
Also,
\begin{equation}\label{eq3.15}
 \Delta^m_x\Psi(x,y) = 0 = \Delta^m_y\Psi(x,y)\quad \text{for}
\quad x\ne 0\quad \text{and}\quad y\ne x
\end{equation}
(see also \cite[Lemma 4.1, p. 137]{HK}) and
\begin{align}
 \intl_{|x|<r} |\Phi(x-y)|\,dx &\le Cr^{2m}\log\frac{5}{r}\notag\\
&\le C|y|^{2m-2} r^2\log \frac5r 
\quad \text{for} \quad 0 < r \le 2|y| < 2.
\label{eq3.16}
\end{align}
Before continuing with the proof of Theorem \ref{thm3.1}, we state and
prove the following lemma.

\begin{lem}\label{lem3.1}
For $|y|<1$ and $0<r<1$ we have
\begin{equation}\label{eq3.17}
 \intl_{|x|<r} |\Psi(x,y)| \,dx \le C|y|^{2m-2} r^2\log \frac5r.
\end{equation}
\end{lem}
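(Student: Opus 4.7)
The plan is to prove Lemma \ref{lem3.1} by splitting into two cases according to whether $r\le 2|y|$ or $r>2|y|$, and in each case invoking the triangle inequality on the definition \eqref{eq3.4} together with the pointwise estimates already at hand. The needed ingredient not yet explicit is a uniform pointwise bound on the Taylor-polynomial terms, namely
\[
|D^\alpha\Phi(x)| \le C|x|^{2m-n-|\alpha|}\log\frac{5}{|x|} \quad\text{for } 0<|x|<1,
\]
which holds for all three cases \eqref{eq3.1}-\eqref{eq3.3} (the log is genuinely needed only in case \eqref{eq3.3}, but $\log(5/|x|)\ge 1$ on $B_1(0)$ so it may be included for free).

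Case A ($r\le 2|y|$). Apply the triangle inequality to \eqref{eq3.4}. The term $\int_{|x|<r}|\Phi(x-y)|\,dx$ is dominated by $C|y|^{2m-2}r^2\log(5/r)$ via \eqref{eq3.16}. For each multi-index $\alpha$ with $|\alpha|=k\le 2m-3$, integration in polar coordinates gives
\[
|y|^k\!\int_{|x|<r}|D^\alpha\Phi(x)|\,dx \le C|y|^k r^{2m-k}\log\frac{5}{r} = C|y|^{2m-2}r^2\log\frac{5}{r}\cdot\left(\frac{r}{|y|}\right)^{\!2m-k-2}.
\]
Since $r/|y|\le 2$ and $2m-k-2\ge 1$, the last factor is $O(1)$, and summing over the finitely many $\alpha$'s yields the claim.

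Case B ($r>2|y|$). Split $\{|x|<r\}$ into $\{|x|<2|y|\}\cup\{2|y|\le|x|<r\}$. The inner integral is handled by Case A with $r$ replaced by $2|y|$, giving the bound $C|y|^{2m-2}(2|y|)^2\log(5/(2|y|))$; since $t\mapsto t^2\log(5/t)$ has derivative $t(2\log(5/t)-1)>0$ on $(0,1)$, this is at most $C|y|^{2m-2}r^2\log(5/r)$. On the outer annulus, the hypothesis $|y|<|x|/2<1$ of \eqref{eq3.12} is satisfied, so
\[
\int_{2|y|\le|x|<r}\!|\Psi(x,y)|\,dx \le C|y|^{2m-2}\!\int_{2|y|}^{r}\!\rho\log\frac{5}{\rho}\,d\rho \le C|y|^{2m-2}r^2\log\frac{5}{r},
\]
the last step being a routine integration by parts on $(0,r)$.

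The main obstacle is organizational rather than analytic: one must verify that the uniform derivative bound on $D^\alpha\Phi$ really holds in all three regimes of $\Phi$, and that the exponent $2m-k$ appearing after integration stays strictly positive for $k\le 2m-3$, ensuring both convergence at the origin and the possibility of extracting the needed factor $(r/|y|)^{2m-k-2}$ with nonnegative exponent in Case A. Both points are straightforward once noticed.
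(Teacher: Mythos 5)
Your proof is correct and follows essentially the same strategy as the paper's: split into cases according to the size of $r$ relative to $|y|$, use \eqref{eq3.16} for the $\Phi(x-y)$ contribution and direct integration of the uniform bound $|D^\alpha\Phi(x)|\le C|x|^{2m-n-|\alpha|}\log(5/|x|)$ for the Taylor terms when $r$ is comparable to or smaller than $|y|$, and use the pointwise estimate \eqref{eq3.12} on the outer annulus when $r$ is larger. The only cosmetic differences are that the paper places the split at $r\le|y|$ rather than $r\le 2|y|$, in the large-$r$ case the paper integrates over $|x|<2r$ rather than $|x|<r$, and the paper explicitly disposes of the degenerate case $y=0$ (where $\Psi(x,0)\equiv 0$) before dividing by $|y|$; none of these affects the substance of the argument.
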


\begin{proof}
Since $\Psi(x,0)\equiv 0$ for $x\ne 0$, we can assume $y\ne 0$.\medskip

\n {\bf Case I.} Suppose $0<r\le |y| <1$. Then by \eqref{eq3.16}
\begin{align*}
\intl_{0<|x|<r} |\Psi(x,y)|\,dx &\le \intl_{0<|x|<r} |\Phi(x-y)|\,dx 
+ \sum_{|\alpha|\le 2m-3} |y|^{|\alpha|} \intl_{0<|x|<r} |D^\alpha\Phi(x)|\,dx\\
&\le C\left[|y|^{2m-2} r^2 \log \frac5r 
+ \sum_{|\alpha|<2m-3} |y|^{|\alpha|} r^{2m-|\alpha|} \log \frac5r\right]\\
&\le C |y|^{2m-2} r^2 \log \frac5r.
\end{align*}

\n {\bf Case II.} Suppose $0<|y|<r<1$. Then by \eqref{eq3.16}, with
$r=2|y|$, and \eqref{eq3.12} we have
\begin{align*}
\intl_{|x|<2r} |\Psi(x,y)|\,dx &= \intl_{2|y|<|x|<2r} |\Psi(x,y)|\,dx 
+ \intl_{|x|<2|y|} |\Psi(x,y)|\,dx\\
&\le C\left[\, \intl_{2|y|<|x|<2r} |y|^{2m-2} |x|^{2-n} 
\log \frac5{|x|} \,dx + |y|^{2m} \log \frac5{|y|}\right.\\
&\quad \left. + \sum_{|\alpha|\le 2m-3} |y|^{|\alpha|} \intl_{|x|<2|y|} 
|D^\alpha\Phi(x)|\,dx\right]\\
&\le C\left[|y|^{2m-2} r^2 \log \frac5r 
+ |y|^{2m-2} |y|^2 \log \frac5{|y|}\right]\\
&\le C |y|^{2m-2} r^2 \log \frac5r
\end{align*}
which proves the lemma.
\end{proof}

Continuing with the proof of Theorem \ref{thm3.1},
let $N$ be defined by \eqref{eq3.7} and let $2r\in (0,1)$ be
fixed. Then for $2r<|x|<1$ we have
\begin{align*}
 N(x) &= \intl_{r<|y|<1} \left[\Phi (y-x) - \sum_{|\alpha|\le 2m-3} 
\frac{(-y)^\alpha}{\alpha!} D^\alpha\Phi(x)\right] \Delta^m u(y)\,dy\\
&\quad -\intl_{0<|y|<r} \Psi(x,y) f(y)\,dy.
\end{align*}
By \eqref{eq3.9} and \eqref{eq3.14}, we can move
differentiation of the second integral with respect to $x$ under the
integral. Hence by \eqref{eq3.15},
\begin{equation}\label{eq3.18}
 \Delta^m N = \Delta^m u
\end{equation}
for $2r< |x|<1$ and since $2r\in (0,1)$ was arbitrary, \eqref{eq3.18}
holds for $0<|x|<1$.

By \eqref{eq3.7}, \eqref{eq3.8}, and Lemma \ref{lem3.1}, for $0<r<1$ we have
\begin{align*}
 \intl_{|x|<r} |N(x)|\,dx &\le \intl_{|y|<1} 
\left(\, \intl_{|x|<r} |\Psi(x,y)|\,dx\right) f(y)\,dy\\
&\le Cr^2 \log \frac5r \intl_{|y|<1} |y|^{2m-2} f(y)\,dy\\
&= O\left(r^2 \log \frac1r\right) \quad \text{as}\quad r\to 0^+
\end{align*}
by \eqref{eq3.9}. Thus by \eqref{eq3.10}
\begin{equation}\label{eq3.19}
 v := u-N \in L^1_{\text{loc}}(B_1(0)) \subset {\cl D}'(B_1(0))
\end{equation}
and
\begin{equation}\label{eq3.20}
 \intl_{|x|<r} |v(x)|\,dx = O\left(r^2\log \frac1r\right) \quad 
\text{as}\quad r\to 0^+.
\end{equation}
By \eqref{eq3.18},
\[
 \Delta^mv(x) = 0\quad \text{for}\quad 0<|x|<1.
\]
Thus $\Delta^mv$ is a distribution in ${\cl D}'(B_1(0))$ whose support
is a subset of $\{0\}$. Hence
\[
 \Delta^mv = \sum_{|\alpha|\le k} a_\alpha D^\alpha\delta
\]
is a finite linear combination of the delta function and its derivatives.

We now use a method of Brezis and Lions \cite{BL} to show $a_\alpha=0$ for
$|\alpha|\ge 2m-1$. Choose $\varphi\in C^\infty_0(B_1(0))$ such that
\[
(-1)^{|\alpha|}(D^\alpha\varphi)(0)=a_\alpha\quad \text{for}\quad  |\alpha|\le k.
\]
Let $\varphi_\vp(x) = \varphi\big(\frac{x}\vp\big)$. Then, for
$0<\vp<1$, $\varphi_\vp\in C^\infty_0(B_1(0))$ and
\begin{align*}
 \int v\Delta^m\varphi_\vp &= (\Delta^mv)(\varphi_\vp) 
= \sum_{|\alpha|\le k} a_\alpha(D^\alpha\delta)\varphi_\vp\\
&= \sum_{|\alpha|\le k} a_\alpha(-1)^{|\alpha|}
\delta(D^\alpha\varphi_\vp) 
= \sum_{|\alpha|\le k} a_\alpha(-1)^{|\alpha|} (D^\alpha\varphi_\vp)(0)\\
&= \sum_{|\alpha|\le k} a_\alpha(-1)^{|\alpha|} \frac1{\vp^{|\alpha|}} 
(D^\alpha\varphi)(0) = \sum_{|\alpha|\le k} a^2_\alpha \frac1{\vp^{|\alpha|}}.
\end{align*}

On the other hand,
\begin{align*}
 \int v\Delta^m \varphi_\vp &= \int v(x) \frac1{\vp^{2m}} 
(\Delta^m\varphi) \left(\frac{x}\vp\right) \,dx\\
&\le \frac{C}{\vp^{2m}} \intl_{|x|<\vp} |v(x)|\,dx 
= O\left(\frac1{\vp^{2m-2}} \log \frac1\vp\right) \quad 
\text{as}\quad \vp\to 0^+
\end{align*}
by \eqref{eq3.20}. Hence $a_\alpha = 0$ for $|\alpha|\ge 2m-1$ and consequently
\[
 \Delta^mv = \sum_{|\alpha|\le 2m-2} a_\alpha D^\alpha\delta 
= \sum_{|\alpha|\le 2m-2} a_\alpha D^\alpha \Delta^m\Phi.
\]
That is
\[
 \Delta^m \left(v-\sum_{|\alpha|\le 2m-2} a_\alpha D^\alpha\Phi\right) 
= 0 \quad \text{in}\quad {\cl D}'(B_1(0)).
\]
Thus for some $C^\infty$ solution of $\Delta^mh = 0$ in $B_1(0)$ we have
\[
 v = \sum_{|\alpha|\le 2m-2} a_\alpha D^\alpha \Phi 
+ h\quad \text{in}\quad B_1(0) - \{0\}.
\]
Hence Theorem \ref{thm3.1} follows from \eqref{eq3.19}.
\end{proof}

\section{Proofs of Theorems \ref{thm1.3} and \ref{thm1.4} and
Corollary \ref{cor1.1}}
\label{sec4}

\indent

In this section we prove Theorems \ref{thm1.3} and \ref{thm1.4} and
Corollary \ref{cor1.1}.

\begin{proof}[Proof of Theorem \ref{thm1.3}]
This proof is a continuation of the proof of Theorem~\ref{thm3.1}.
If $m=1$ then Theorem~\ref{thm1.3} is trivially true. Hence we can
assume $m\ge 2$. Also, if $\sigma=m$ then \eqref{eq4.2} follows
trivially from \eqref{eq4.1}. Hence we can assume $\sigma\le m-1$ in
\eqref{eq4.2}.

If $\alpha$ and $\beta$ are multi-indices and $|\alpha|= 2m-2$ then it
follows from \eqref{eq3.1}--\eqref{eq3.3} that
\begin{equation}\label{eq4.4}
 D^{\alpha+\beta}\Phi(x) =
 O\left(\left|\frac{d^{|\beta|}}{d|x|^{|\beta|}} 
\Gamma_0(|x|)\right|\right) \quad \text{as}\quad x\to 0.
\end{equation}
(This is clearly true if $n=2$. If $n\ge 3$ then $|\alpha+\beta| =
2m-2+|\beta| > 2m-n$ and thus
\[
D^{\alpha+\beta} \Phi(x) = O(|x|^{2m-n-(2m-2+|\beta|)}) 
= O\left(\left|\frac{d^{|\beta|}}{d|x|^{|\beta|}} 
\Gamma_0(|x|)\right|\right).)
\]
Let $L^b$ be any linear partial differential operator of the form
$\sum\limits_{|\beta|=b} c_\beta D^\beta$, where $b$ is a nonnegative
integer and $c_\beta\in {\bb R}$. Then applying Taylor's theorem to
\eqref{eq3.13} and using \eqref{eq4.4} we obtain
\begin{equation}\label{eq4.5}
 |L^b_x\Psi(x,y)|\le C|y|^{2m-2} \left|\frac{d^b}{d|x|^b} 
\Gamma_0(|x|)\right| \quad \text{for}\quad |y|< \frac{|x|}2<1.
\end{equation}
Here and later $C$ is a positive constant, independent of $x$ and $y$,
whose value may change from line to line. For $0\le b\le 2m-1$ we have
\[
 L^bN(x)=\int\limits_{|y|<1} -L^b_x \Psi(x,y) f(y)\,dy\quad 
\text{for}\quad 0<|x|<1.
\]
Hence by \eqref{eq4.4}, \eqref{eq4.5}, \eqref{eq3.6} and \eqref{eq3.9} we have
\begin{equation}\label{eq4.6}
 L^bu(x) \le C\left|\frac{d^b}{d|x|^b} 
\Gamma_0(|x|)\right|\quad \text{for}\quad 0<|x|<1
\end{equation}
provided $0\le b\le 2m-1$ and
\begin{equation}\label{eq4.7}
 -L^b_x\Psi(x,y) \le C|y|^{2m-2} \left|\frac{d^b}{d|x|^b} 
\Gamma_0(|x|)\right|\quad \text{for}\quad 0<\frac{|x|}2 < |y|<1.
\end{equation}
We will complete the proof of Theorem~\ref{thm1.3} by proving
\eqref{eq4.7} for various choices for $L^b$. For the rest of the proof
of Theorem~\ref{thm1.3} we will always assume
\begin{equation}\label{eq4.8}
 0 < \frac{|x|}2 < |y|<1
\end{equation}
which implies
\begin{equation}\label{eq4.9}
 |x-y| \le |x|+|y| \le 3|y|.
\end{equation}

\n {\bf Case I.} Suppose $\Phi$ is given by \eqref{eq3.1} or
\eqref{eq3.2}. It follows from \eqref{eq3.13} and \eqref{eq4.8} that
\begin{align*}
 |D^\beta_x\Psi(x,y) - D^\beta_x\Phi(x-y)| &\le 
C\sum_{|\alpha|\le 2m-3} 
|y|^{|\alpha|} |x|^{2m-n-|\alpha|-|\beta|}\\
&\le C|y|^{2m-2} |x|^{2-n-|\beta|}.
\end{align*}
Thus \eqref{eq4.7}, and hence \eqref{eq4.6}, holds provided $0\le b\le
2m-1$ and
\begin{equation}\label{eq4.10}
-(L^b\Phi)(x-y) \le C|y|^{2m-2} |x|^{2-n-b}.
\end{equation}

\n {\bf Case I(a).} Suppose $\Phi$ is given by \eqref{eq3.1}. Let
$\sigma\in [0,m-1]$ be an integer, $b=2\sigma$, and
$L^b=(-1)^{m+\sigma}\Delta^\sigma$. Then $0 \le b\le 2m-2$ and
\[
\text{sgn}(-L^b\Phi) = (-1)^{1+m+\sigma} \text{ sgn } 
\Delta^\sigma\Phi = (-1)^{1+2m+\sigma} \text{ sgn } 
\Delta^\sigma|x|^{2m-n} = (-1)^{1+2m+2\sigma}=-1.
\]
Thus \eqref{eq4.10}, and hence \eqref{eq4.6} holds with
$L^b=(-1)^{m+\sigma}\Delta^\sigma$ and $0\le \sigma \le m-1$. This
completes the proof of Theorem~\ref{thm1.3} when $\Phi$ is given
by \eqref{eq3.1}.\medskip

\n {\bf Case I(b).} Suppose $\Phi$ is given by \eqref{eq3.2}. Then $n$
is odd. It follows from \eqref{eq4.8} and \eqref{eq4.9} that for $0\le
|\beta|\le 2m-n$ we have
\[
 |(D^\beta\Phi)(x-y)| \le C|x-y|^{2m-n-|\beta|} \le
 C|y|^{2m-n-|\beta|} 
\le C|y|^{2m-2} |x|^{2-n-|\beta|}.
\]
So \eqref{eq4.10} holds with $L^b = \pm D^\beta$ and $|\beta|=b$. Hence
\[
 |D^\beta u(x)| \le C|x|^{2-n-|\beta|} \quad \text{for}\quad 0 
\le |\beta|\le 2m-n\quad \text{and} \quad 0<|x|<1.
\]
In particular
\[
 |\Delta^\sigma u(x)|\le C|x|^{2-n-2\sigma} \quad \text{for}
\quad 2\sigma \le 2m-n\quad \text{and}\quad 0<|x|<1.
\]
Also, if $2m-n+1 \le 2\sigma \le 2m-2$, $b=2\sigma$, and $L^b =
(-1)^{m+\sigma}\Delta^\sigma$, then $0 \le \sigma\le m-1$ and
\begin{align*}
 \text{sgn} (-L^b\Phi) &= (-1)^{m+\sigma+1} \text{ sgn } 
\Delta^\sigma\Phi = (-1)^{m+\sigma+1+\frac{n-1}2} \text{ sgn } 
\Delta^\sigma |x|^{2m-n}\\
&= (-1)^{m+\sigma+1+\frac{n-1}2} \text{ sgn}
(\Delta^{\frac{b-(2m-n+1)}2} 
\Delta^{\frac{2m-n+1}2} |x|^{2m-n})\\ 
&= (-1)^{m+\sigma+1+\frac{n-1}2 +\sigma-m+\frac{n-1}2} = -1
\end{align*}
because $\Delta^{\frac{2m-n+1}2} |x|^{2m-n} = C|x|^{-1}$ where $C>0$.

So \eqref{eq4.10} holds with $L^b =
(-1)^{m+\sigma}\Delta^\sigma$. Hence $(-1)^{m+\sigma} \Delta^\sigma
u(x) \le C|x|^{2-n-2\sigma}$ for $0 \le \sigma\le m-1$ and
$0<|x|<1$. This completes the proof Theorem~\ref{thm1.3} when $\Phi$
is given by \eqref{eq3.2}.\medskip

\n {\bf Case II.} Suppose $\Phi$ is given by \eqref{eq3.3}. Then $2\le
n \le 2m$ and $n$ is even. To prove Theorem~\ref{thm1.3} in Case~II,
it suffices to prove the following three statements.
\begin{itemize}
 \item[(i)] Estimate \eqref{eq4.3} holds when $n=2$, $\beta=0$, and $m\ge 2$.
\item[(ii)] Estimate \eqref{eq4.3} holds when $|\beta|\le 2m-n-1$ 
            and either $n\ge 3$ or $|\beta|\ge 1$.
\item[(iii)] Estimate \eqref{eq4.2} holds for $2m-n\le 2\sigma\le 2m-2$.
\end{itemize}
\medskip

\noindent {\it Proof of (i).}
Suppose $n=2$, $\beta=0$, and $m \ge 2$. Then, since $u$ is 
nonnegative, to prove (i) it suffices to prove
\[
 u(x) \le C \log \frac5{|x|}\quad \text{for}\quad 0<|x|<1
\]
which holds if \eqref{eq4.7} holds with $b=0$ and $L^b=D^0 =$ id. That
is if
\begin{equation}\label{eq4.11}
-\Psi(x,y) \le C|y|^{2m-2} \log \frac5{|x|}
\end{equation}
By \eqref{eq3.4}, \eqref{eq4.8}, and \eqref{eq4.9} we have
\begin{align*}
 |\Psi(x,y) - \Phi(x-y)| &\le \sum_{|\alpha|\le 2m-3} |y|^{|\alpha|} 
|D^\alpha\Phi(x)|\\
&\le C \sum_{|\alpha|\le 2m-3} |y|^{|\alpha|} |x|^{2m-2-|\alpha|} 
\log \frac5{|x|} \le C |y|^{2m-2} \log \frac5{|x|}
\end{align*}
and
\begin{align*}
 |\Phi(x-y)| &= a|x-y|^{2m-2} \log \frac5{|x-y|}\\
&\le C|y|^{2m-2} \log \frac5{|y|} \le C|y|^{2m-2} \log \frac5{|x|}
\end{align*}
which imply \eqref{eq4.11}. This completes the proof of (i).
\medskip

\noindent {\it Proof of (ii).}
Suppose $|\beta|\le 2m-n-1$ and either $n\ge 3$ or $|\beta|\ge
  1$. Then $n+|\beta|\ge 3$ and in
order to prove (ii) it suffices to prove
\begin{equation}\label{eq4.12}
 |D^\beta_x\Psi(x,y)|\le C|y|^{2m-2} 
\left|\frac{d^{|\beta|}}{d|x|^{|\beta|}} \Gamma_0(|x|)\right|
\end{equation}
because then \eqref{eq4.7}, and hence \eqref{eq4.6}, holds with
$L^b=\pm D^\beta$.

Since $\Phi$ is given by \eqref{eq3.3} we have $n\ge 2$ is even and
\[
 \Phi(x) = P(x) \log \frac5{|x|}
\]
where $P(x) = a(-1)^{\frac{n}2} |x|^{2m-n}$ is a \emph{polynomial} of
degree $2m-n$. Since $D^\beta P$ is a polynomial of degree
$2m-n-|\beta|\le 2m-3$ we have
\begin{equation}\label{eq4.13}
 D^\beta_xP(x-y) = \sum_{|\alpha|\le 2m-3} 
\frac{(-y)^\alpha}{\alpha!} D^{\alpha+\beta}P(x).
\end{equation}

Since $D^\beta_x\Psi(x,y) = A_1+A_2+A_3$, where
\begin{align*}
 A_1 &= D^\beta_x\Psi(x,y) - D^\beta_x\Phi(x-y) + (D^\beta_xP(x-y)) 
\log \frac5{|x|}\\
A_2 &= D^\beta_x\Phi(x-y) - (D^\beta_xP(x-y)) \log \frac5{|x-y|}\\
A_3 &= (D^\beta_xP(x-y)) \log \frac{|x|}{|x-y|},
\end{align*}
to prove \eqref{eq4.12} it suffices to prove for $j=1,2,3$ that
\begin{equation}\label{eq4.14}
 |A_j| \le C|y|^{2m-2} \left|\frac{d^{|\beta|}}{d|x|^{|\beta|}} 
\Gamma_0(|x|)\right|.
\end{equation}

Since
\begin{align*}
 \left|D^{\alpha+\beta}\Phi(x) - (D^{\alpha+\beta}P(x)) 
\log \frac5{|x|}\right| &= \left|
\sum_{\underset{\scriptstyle |\alpha+\beta-\gamma|\ge 1}{\gamma\le
    \alpha+\beta}} 
\binom{\alpha+\beta}\gamma (D^\gamma P(x))
\left(D^{\alpha+\beta-\gamma} \log \frac5{|x|}\right)\right|\\
&\le C |x|^{2m-n-|\alpha|-|\beta|}
\end{align*}
it follows from \eqref{eq3.13}, \eqref{eq4.13}, and \eqref{eq4.8} that
\begin{align*}
 |A_1| = |-A_1| &= \left|\sum_{|\alpha|\le 2m-3} 
\frac{(-y)^\alpha}{\alpha!} D^{\alpha+\beta} \Phi(x) 
- \sum_{|\alpha|\le 2m-3} \frac{(-y)^\alpha}{\alpha!} 
(D^{\alpha+\beta} P(x)) \log \frac5{|x|}\right|\\
&\le C \sum_{|\alpha|\le 2m-3} |y|^{|\alpha|}
|x|^{2m-n-|\alpha|-|\beta|} 
\le C |y|^{2m-2} |x|^{2-n-|\beta|}\\
&= C|y|^{2m-2} \left|\frac{d^{|\beta|}}{d|x|^{|\beta|}} \Gamma_0(|x|)\right|.
\end{align*}
Thus \eqref{eq4.14} hold when $j=1$.

Since $A_2=0$ when $\beta=0$, we can assume for the proof of
\eqref{eq4.14} when $j=2$ that $|\beta|\ge 1$. Then by \eqref{eq4.9}
and \eqref{eq4.8},
\begin{align*}
 |A_2| &= \left|\sum_{\underset{\scriptstyle |\beta-\alpha|\ge 1}{\alpha\le \beta}} 
\binom\beta\alpha (D^\alpha_xP(x-y)) \left(D^{\beta-\alpha}_x 
\log \frac5{|x-y|}\right)\right|\\
&\le C|x-y|^{2m-n-|\beta|} \le C|y|^{2m-n-|\beta|}\\
&\le C |y|^{2m-2} |x|^{2-n-|\beta|}\\
&= C|y|^{2m-2} \left|\frac{d^{|\beta|}}{d|x|^{|\beta|}} \Gamma_0(|x|)\right|.
\end{align*}
Thus \eqref{eq4.14} holds when $j=2$.

Finally we prove \eqref{eq4.14} when $j=3$. Let $d=2m-n-|\beta|$. Then
$1\le d\le 2m-3$,
\[
 |A_3| \le C|x-y|^d \left|\log \frac{|x|}{|x-y|}\right|
\]
and by \eqref{eq4.8} and \eqref{eq4.9} we have
\begin{align*}
 |x-y|^d \left|\log \frac{|x|}{|x-y|}\right| &\le
\begin{cases}
 |x-y|^d \left(\dfrac{|x|}{|x-y|}\right)^d 
= |x|^d \le C|y|^{2m-2} |x|^{2-n-|\beta|} &\text{if $|x-y|\le |x|$}\\
|x-y|^d \left(\dfrac{|x-y|}{|x|}\right)^{2m-2-d} 
= |x-y|^{2m-2} |x|^{2-n-|\beta|} &\text{if $|x|\le |x-y|$}
\end{cases}\\
&\le C|y|^{2m-2} |x|^{2-n-|\beta|} 
= C|y|^{2m-2} \left|\frac{d^{|\beta|}}{d|x|^{|\beta|}} \Gamma_0(|x|)\right|.
\end{align*}
Thus \eqref{eq4.14} holds when $j=3$. This completes the proof of
\eqref{eq4.12} and hence of (ii).
\medskip

\noindent {\it Proof of (iii).}
Suppose $2m-n\le 2\sigma \le 2m-2$. In order to prove (iii) it suffices to prove
\begin{equation}\label{eq4.15}
 (-1)^{m+\sigma+1} \Delta^\sigma_x\Psi(x,y) \le C|y|^{2m-2} 
\left|\frac{d^{2\sigma}}{d|x|^{2\sigma}} \Gamma_0(|x|)\right|
\end{equation}
because then \eqref{eq4.7}, and hence \eqref{eq4.6}, holds with
$L^b=(-1)^{m+\sigma}\Delta^\sigma$ and $b=2\sigma$.

If $|\beta|=2\sigma$ then \eqref{eq4.8} implies
\begin{align*}
\left|\sum_{1\le |\alpha| \le 2m-3} \frac{(-y)^\alpha}{\alpha!} 
D^{\alpha+\beta} \Phi(x)\right| &\le C \sum_{1\le |\alpha|\le 2m-3} 
|y|^{|\alpha|} |x|^{2m-n-|\alpha|-|\beta|}\\
&\le C|y|^{2m-2} |x|^{2-n-|\beta|}.
\end{align*}
Thus it follows from \eqref{eq3.13} that
\[
 |\Delta^\sigma_x\Psi(x,y) - \Delta^\sigma_x\Phi(x-y) 
+ \Delta^\sigma\Phi(x)|\le C|y|^{2m-2} |x|^{2-n-2\sigma}.
\]
Hence to prove \eqref{eq4.15} it suffices to prove
\begin{equation}\label{eq4.16}
 (-1)^{m+\sigma+1} (\Delta^\sigma_x \Phi(x-y) - \Delta^\sigma\Phi(x)) 
\le C|y|^{2m-2} |x|^{2-n-2\sigma}.
\end{equation}
We divide the proof of \eqref{eq4.16} into cases.\medskip

\n {\bf Case 1.} Suppose $2\le 2m-n+2\le 2\sigma\le 2m-2$. Then by \eqref{eq4.8}
\[
 |\Delta^\sigma\Phi(x)| \le C|x|^{2m-n-2\sigma} \le C|y|^{2m-2} |x|^{2-n-2\sigma}
\]
and since
\begin{equation}\label{eq4.17}
 \Delta^{\frac{2m-n}2} \left(|x|^{2m-n} \log \frac5{|x|}\right) 
= A \log \frac5{|x|}-B
\end{equation}
where $A>0$ and $B\ge 0$ are constants, we have
\[
 \text{sgn}((-1)^{m+\sigma+1} \Delta^\sigma\Phi(z)) 
= (-1)^{m+\sigma+\frac{n}2+1} (-1)^{\sigma - \frac{2m-n}2} 
= -1 \quad \text{for}\quad |z|>0.
\]
This proves \eqref{eq4.16} and hence (iii) in Case 1.
\medskip 

\n {\bf Case 2.} Suppose $2\sigma =2m-n$. Then by \eqref{eq4.17} and
\eqref{eq4.9} we have
\begin{align*}
 (-1)^{m+\sigma+1} (\Delta^\sigma_x\Phi(x-y) - \Delta^\sigma\Phi(x)) 
&= (-1)^{\frac{n}2 +m+\sigma+1} A \log \frac{|x|}{|x-y|}\\
&= A\log \frac{|x-y|}{|x|} \le A \log \frac{3|y|}{|x|} 
\le A \left(\frac{3|y|}{|x|}\right)^{2m-2}\\
&= A 3^{2m-2} |y|^{2m-2} |x|^{2-n-2\sigma}.
\end{align*}
This proves \eqref{eq4.16} and hence (iii) in Case 2, and thereby
completes the proof of Theorem~\ref{thm1.3}.
\end{proof}

\begin{proof}[Proof of Theorem \ref{thm1.4}]
Let $u(x)$ be defined in terms of $v(y)$ by \eqref{eq1.5.1}. Then by
\eqref{eq1.5.2} and \eqref{neq4.1}, $u(x)$ is a $C^{2m}$ nonnegative
solution of \eqref{eq4.1}, and hence $u(x)$ satisfies the conclusion
of Theorem \ref{thm1.3}. It is a straight-forward exercise to show
that \eqref{neq4.3} follows from \eqref{eq4.3} when $n<2m$ and $\beta$
satisfies \eqref{neq4.4}. So to complete the proof of Theorem
\ref{thm1.4} we will now prove \eqref{neq4.2}.

Suppose $\sigma\le m$ is a nonnegative integer. Let $v_\sigma(y)$ be
the $\sigma$-Kelvin transform of $u(x)$. Then  
$v_\sigma(y)=|y|^{2\sigma-2m}v(y)$ and thus by \eqref{eq4.2}, we have
for $|y|>1$ that
\begin{align*}
(-1)^{m+\sigma}\Delta^\sigma(|y|^{2\sigma-2m}v(y))
&=(-1)^{m+\sigma}\Delta^\sigma v_\sigma(y)\\
&=(-1)^{m+\sigma}|x|^{n+2\sigma}\Delta^\sigma u(x)\\
&\le C|x|^{n+2\sigma} \left|\frac{d^{2\sigma}}{d|x|^{2\sigma}} 
\Gamma_0(|x|)\right|\\
&\le C\begin{cases}
|x|^2\log\frac{5}{|x|} 
&\text{if $\sigma=0$ and $n=2$}\\
|x|^2
&\text{if $\sigma\ge 1$ or $n\ge 3$}
\end{cases}
\end{align*}
which implies \eqref{neq4.2} after replacing $|x|$ with $1/|y|$.
\end{proof}

\begin{proof}[Proof of Corollary \ref{cor1.1}]
Theorem \ref{thm1.4} implies \eqref{eq1.14.1} and 
\[
-\Delta(|y|^{-2}v(y))\le C|y|^{-2} \qquad \text{for} \quad |y|>1 
\]
and thus for $|y|>1$ we have
\begin{align*}
-|y|^{-2}\Delta v(y)&=-\Delta(|y|^{-2} v(y))+(\Delta|y|^{-2})v(y) 
+ 2\nabla|y|^{-2}\cdot\nabla v(y)\\
&\le -\Delta(|y|^{-2} v(y))+C\left(|y|^{-4}\Gamma_\infty(|y|)+|y|^{-3}
\frac{d}{d|y|} \Gamma_\infty(|y|)\right)\\
&\le C\begin{cases}
|y|^{-2} & \text{if $n=3$}\\
|y|^{-2}\log 5|y| & \text{if $n=2$}
\end{cases}\\
&\le C|y|^{-2}\left|\frac{d^2}{d|y|^2} \Gamma_\infty(|y|)\right|
\end{align*}
which implies \eqref{eq1.14.2}.
\end{proof}

\section{Proof of Theorem \ref{thm1.1}}
\label{sec5}

\indent

As noted in the introduction, 
the sufficiency of condition \eqref{eq1.3} in Theorem \ref{thm1.1} and
the estimate \eqref{eq1.5} follow from Theorem \ref{thm1.3}, which we
proved in the last section. Consequently, we can complete the proof of
Theorem \ref{thm1.1} by proving the following proposition.

\begin{pro}\label{pro5.1}
  Suppose $n\ge 2$ and $m\ge 1$ are integers such that \eqref{eq1.3}
  does not hold. Let $\psi\colon (0,1)\to (0,\infty)$ be a
  continuous function. Then there exists a $C^\infty$ positive solution
  of
\begin{equation}\label{eq4.3.1}
 -\Delta^m u \ge 0\quad \text{in}\quad B_1(0)-\{0\}\subset {\bb R}^n
\end{equation}
such that
\begin{equation}\label{eq4.3.2}
 u(x)\ne O(\psi(|x|)) \quad \text{as}\quad x\to 0.
\end{equation}
\end{pro}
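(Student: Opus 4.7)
\textbf{Proof plan for Proposition \ref{pro5.1}.}

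The failure of \eqref{eq1.3} means $m$ is odd and $n \ge 2m$. My first step is to observe that in this regime, the fundamental solution $\Phi$ has a definite sign near the origin: in case \eqref{eq3.1} (when $n>2m$) we have $\Phi(x) = -a|x|^{2m-n}<0$ since $(-1)^m=-1$, and in case \eqref{eq3.3} (when $n=2m$, which forces $n$ even) we have $\Phi(x) = -a\log(5/|x|)<0$ for $|x|<5$. Thus $-\Phi>0$ on $B_1(0)-\{0\}$ in both subcases, and $-\Phi(x)\to +\infty$ as $x\to 0$.

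The construction is a telescoping sum of smoothed translated copies of $-\Phi$ centered at a sequence $x_k\to 0$. Explicitly, choose $x_k \in B_1(0)-\{0\}$ with $|x_k|=1/k$, pick disjoint balls $B_{r_k}(x_k)\subset B_1(0)-\{0\}$ (and disjoint from each other, with $r_k < |x_k|/4$), and let $\rho_k(y) = r_k^{-n}\rho((y-x_k)/r_k)$ where $\rho \in C_c^\infty(B_1(0))$ is a fixed nonnegative bump with $\int \rho = 1$. Put
\[
u_k(x) := -(\Phi * \rho_k)(x) = -\int \Phi(x-y)\rho_k(y)\,dy.
\]
Since $\Phi$ is locally integrable and $\rho_k$ is smooth with compact support, $u_k \in C^\infty(\mathbb{R}^n)$, and $-\Delta^m u_k = \rho_k \ge 0$ everywhere. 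Moreover $u_k \ge 0$ on $B_1(0)$ because $-\Phi(x-y) \ge 0$ whenever $x,y \in B_1(0)$ (distances stay under $2<5$). Finally, substituting $y = x_k + r_k z$ and using the radial form of $-\Phi$,
\[
u_k(x_k) \;=\; \int (-\Phi)(r_k z)\,\rho(z)\,dz \;=\; \begin{cases} a\,r_k^{2m-n}\displaystyle\int |z|^{2m-n}\rho(z)\,dz & \text{if } n>2m,\\[2pt] a\displaystyle\int \log(5/(r_k|z|))\,\rho(z)\,dz & \text{if } n=2m,\end{cases}
\]
both of which tend to $+\infty$ as $r_k \to 0^+$.

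Now set $c_k = 2^{-k}$ and, using the limit above, choose $r_k$ small enough that $c_k u_k(x_k) \ge k\,\psi(|x_k|) + k$. Define $u := \sum_{k=1}^\infty c_k u_k$. The main checks are routine: on any compact $K \subset B_1(0)-\{0\}$ we have $\mathrm{dist}(K,0) \ge \delta > 0$, so for all but finitely many $k$ the support of $\rho_k$ lies in $B_{\delta/2}(0)$, and hence every derivative $D^\alpha u_k$ is uniformly bounded on $K$ by a constant $C_{K,\alpha}$ depending only on bounds for $D^\alpha \Phi$ on $\{|z|\ge \delta/2\}$; absolute convergence of $\sum c_k$ then gives uniform convergence of $\sum c_k D^\alpha u_k$ on $K$. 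Therefore $u \in C^\infty(B_1(0)-\{0\})$ with $-\Delta^m u = \sum c_k \rho_k \ge 0$ (the sum is locally finite because the supports of $\rho_k$ are pairwise disjoint), and $u \ge 0$. Finally, splitting off the $k$-th term,
\[
u(x_k) \;\ge\; c_k u_k(x_k) - \sum_{j\neq k} c_j u_j(x_k) \;\ge\; k\,\psi(|x_k|) + k - M,
\]
where $M := \sum_j c_j \sup_{x\in B_1(0)} u_j(x) < \infty$ is a fixed constant (each $u_j$ is bounded on $B_1(0)$ since $\Phi$ is locally integrable and $\rho_j$ is a fixed-mass mollification). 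Hence $u(x_k)/\psi(|x_k|)\to\infty$, establishing \eqref{eq4.3.2}. To get a \emph{strictly positive} solution one simply adds a positive constant, or replaces $u$ by $u + \varepsilon$.

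The one delicate point---and the only place where the failure of \eqref{eq1.3} is used in an essential way---is the sign analysis of $\Phi$ in the first paragraph; once that is in hand, everything else is the standard "accumulating singularities" trick. The bound $M<\infty$ for the cross-terms in the final display requires a uniform bound on $\sup_{B_1(0)} u_j$ independent of the smallness of $r_j$; this is where it matters that I chose $\rho_k$ to be the $L^1$-normalized dilation of a fixed bump (so that $u_j$ is bounded by the global Newtonian-type potential of a unit mass), rather than rescaling to blow up $\rho_k$ itself.
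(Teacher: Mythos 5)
Your construction is essentially the paper's: the same sign analysis of $\Phi$ when \eqref{eq1.3} fails ($m$ odd, $n\ge 2m$, so $-\Phi>0$ near $0$ and $-\Phi(x)\to\infty$), followed by convolving $-\Phi$ against a sum of small bumps of summable mass centered at points $x_k\to 0$, with the bump radii shrunk until the value at $x_k$ beats $k\,\psi(|x_k|)$; the paper writes this as $u=-\Phi*(f\chi_{B_1})$ with $f=\sum f_j$, which is the same object as your $\sum_k c_k\,(-\Phi*\rho_k)$, and the smoothness/convergence and $-\Delta^m u=\sum c_k\rho_k\ge 0$ checks are handled the same way.

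One step in your write-up is wrong, though harmlessly so. The quantity $M:=\sum_j c_j\sup_{B_1(0)}u_j$ is \emph{not} finite: by your own choice of $r_j$ you have $c_j\sup_{B_1(0)}u_j\ge c_j u_j(x_j)\ge j\,\psi(|x_j|)+j$, so the terms do not even tend to zero. This also contradicts the claim in your closing paragraph that the $L^1$-normalization gives a bound on $\sup_{B_1(0)}u_j$ independent of $r_j$ --- you computed earlier that $u_j(x_j)\to\infty$ as $r_j\to 0^+$, which is exactly what the construction exploits. The fix is immediate and makes the argument simpler: since you already showed $u_j\ge 0$ on $B_1(0)$, the cross terms are nonnegative and $u(x_k)=\sum_j c_j u_j(x_k)\ge c_k u_k(x_k)\ge k\,\psi(|x_k|)+k$, which gives \eqref{eq4.3.2} directly; no control of the other terms is needed (this is also how the paper argues, via $u(x_j)\ge \int -\Phi(x_j-y)f_j(y)\,dy>\alpha_j$). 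Two further cosmetic points: strict positivity already holds without adding $\varepsilon$, since $-\Phi(x-y)>0$ for $x,y\in B_1(0)$; and "locally finite" for $\sum c_k\rho_k$ follows from the supports accumulating only at $0$, not from their disjointness.
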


\begin{proof}
  Let $\{x_j\}^\infty_{j=1} \subset {\bb R}^n-\{0\}$ be a sequence
  such that $4|x_{j+1}| < |x_j| < 1$. Choose $\alpha_j>0$ such that
\begin{equation}\label{eq4.18}
 \frac{\alpha_j}{\psi(x_j)}\to \infty \quad \text{as}\quad j\to \infty.
\end{equation}
Since \eqref{eq1.3} does not hold, it follows from
\eqref{eq3.1}--\eqref{eq3.3} that $\lim\limits_{x\to 0} - \Phi(x) =
\infty$ and $-\Phi(x) > 0$ for $0<|x|<5$. Hence we can choose $R_j\in
(0, |x_j|/4)$ such that
\begin{equation}\label{eq4.19}
\intl_{|z|<R_j} - \Phi(z)\,dz > R^n_j 2^j\alpha_j,\quad \text{for}
\quad j=1,2,\ldots~.
\end{equation}
Let $\varphi\colon {\bb R}\to [0,1]$ be a $C^\infty$ function such
that $\varphi(t) = 1$ for $t\le 1$ and $\varphi(t) = 0$ for $t\ge
2$. Define $f_j\in C^\infty_0(B_{\frac{|x_j|}2}(x_j))$ by
\[
 f_j(x) = \frac1{2^jR^n_j} \varphi\left(\frac{|x-x_j|}{R_j}\right).
\]
Then the functions $f_j$ have disjoint supports and
\[
 \intl_{{\bb R}^n} f_j(x)\,dx = \intl_{|x-x_j|<2R_j} f_j(x)\,dx 
\le \frac{C(n)}{2^j}.
\]
Thus $f := \sum\limits^\infty_{j=1} f_j\in L^1({\bb R}^n) \cap
C^\infty({\bb R}^n - \{0\})$ and hence the function $u\colon B_1(0)
- \{0\}\to {\bb R}$ defined by
\[
 u(x) := \intl_{|y|<1} - \Phi(x-y) f(y)\,dy
\]
is a $C^\infty$ positive solution of \eqref{eq4.3.1}. Also
\begin{align*}
u(x_j) &\ge \intl_{|y|<1} - \Phi(x_j-y) f_j(y)\,dy\\
&\ge \frac1{2^jR^n_j} \intl_{|x-x_j|<R_j} - \Phi(x_j-y)\,dy\\
&= \frac1{2^jR^n_j} \intl_{|z|<R_j} - \Phi(z)\,dz > \alpha_j
\end{align*}
by \eqref{eq4.19}. Hence \eqref{eq4.18} implies that $u$ satisfies
\eqref{eq4.3.2}.
\end{proof}

\end{document}